\newtheorem{theorem}{Theorem}[section]
\newtheorem{corollary}[theorem]{Corollary}
\newtheorem{lemma}[theorem]{Lemma}
\newtheorem{problem}[theorem]{Problem}
\newtheorem{proposition}[theorem]{Proposition}
\newtheorem{remark}[theorem]{Remark}
\def\J#1#2#3{ \left\{ #1,#2,#3 \right\} }
\def\11{\textbf{$1$}}
\begin{document}

\title[Local triple derivations ]{Local triple derivations on C$^*$-algebras and JB$^*$-triples}
\date{}

\author[Burgos]{Maria Burgos}
\email{maria.burgos@uca.es}
\address{Departamento de Matematicas, Facultad de Ciencias Sociales y de la Educacion,  Universidad de Cádiz, 11405, Jerez de la Frontera, Spain.}

\author[Fern\'{a}ndez-Polo]{Francisco J. Fern\'{a}ndez-Polo}
\email{pacopolo@ugr.es}
\address{Departamento de An{\'a}lisis Matem{\'a}tico, Facultad de
Ciencias, Universidad de Granada, 18071 Granada, Spain.}

\author[A.M. Peralta]{Antonio M. Peralta}
\email{aperalta@ugr.es}
\address{Departamento de An{\'a}lisis Matem{\'a}tico, Facultad de
Ciencias, Universidad de Granada, 18071 Granada, Spain.}

\thanks{Authors partially supported by the Spanish Ministry of Economy and Competitiveness,
D.G.I. project no. MTM2011-23843, and Junta de Andaluc\'{\i}a grants FQM0199 and
FQM3737.}

\subjclass[2000]{Primary 47B47, 46L57; Secondary 17C65, 47C15, 46L05, 46L08}

\keywords{triple derivation; (continuous) local triple derivation; generalised derivation; generalised Jordan derivation; C$^*$-algebra; real C$^*$-algebra; JB$^*$-triple; real JB$^*$-triple; range tripotent; compact tripotent; non-commutative Urysohn's lemma.}

\date{February, 2013}

\maketitle
 \thispagestyle{empty}

\begin{abstract} In a first result we prove that every continuous local triple derivation on a JB$^*$-triple is a triple derivation. We also give an automatic continuity result, that is, we show that local triple derivations on a JB$^*$-triple are continuous even if not assumed a priori to be so. In particular every local triple derivation on a C$^*$-algebra is a triple derivation. We also explore the connections between (bounded local) triple derivations and generalised (Jordan) derivations on a C$^*$-algebra.
\end{abstract}

\section{Introduction}

An (associative) \emph{derivation} from a C$^*$-algebra $A$ into a Banach $A$-bimodule $X$ is a linear mapping $D: A \to X$ satisfying $D(a b) = D(a) b + a D(b),$ for every $a,b$ in $A$. A linear mapping $T : A\to X$ is a \emph{local derivation} if for each $a$ in $A$ there is a derivation $D_a$ from $A$ into $X$ with $D_a (a) = T(a).$ Local derivations were introduced by R.V. Kadison \cite{Kad90} who proved that every continuous local derivation from a von Neumann algebra $M$ (i.e. a C$^*$-algebra which is also a dual Banach space) into a dual Banach $M$-bimodule is a derivation. Kadison's theorem motivated a flourishing line of research which culminates in 2001 with a definite contribution by B.E. Johnson \cite{John01}, who shows that every bounded local derivation from a C$^*$-algebra $A$ into a Banach $A$-bimodule is a derivation. In the just quoted paper, Johnson showed that the continuity hypothesis is, in fact, superfluous by proving that every local derivation from a C$^*$-algebra $A$ into a Banach $A$-bimodule is continuous.\smallskip

Besides of holomorphic motivations coming from the classification of bounded symmetric domains in arbitrary complex Banach spaces (cf. \cite{Ka}), there are additional reasons, from the point of view of Functional Analysis, to study a strictly wider class of complex Banach spaces which comprises the categories of C$^*$-algebras, JB$^*$-algebras and ternary rings of operators, and is known as the category of JB$^*$-triples. For example, contrary to what happens for C$^*$-algebras, the category of JB$^*$-triples is stable under contractive projections (cf. \cite{Sta}, \cite{Ka2} and \cite{FriRu4}). A JB$^*$-triple is a complex Banach space $E$ equipped with a triple product $\J ... : E\times E\times E \to E$ which is conjugate linear in the middle variable and symmetric and linear in the outer variables satisfying certain algebraic-analytic axioms (see Subsection 1.1 for more details). Every C$^*$-algebra is a JB$^*$-triple when equipped with the triple product given by $$ \J xyz =\frac12 (x y^* z +z y^* x).$$

A \emph{triple derivation} on a JB$^*$-triple $E$ is a linear mapping $\delta: E\to E$ satisfying that $$\delta \J abc = \J {\delta(a)}bc + \J a{\delta(b)}c + \J ab{\delta(c)},$$ for every $a,b,c\in E$. A \emph{local triple derivation} on $E$ is a linear map $T : E\to E$ such that for each $a$ in $E$ there exists a triple derivation $\delta_{a}$ on $E$ satisfying $T(a) = \delta_a (a).$ Inspired by the results proved by R.V. Kadison and B.E. Johnson it is natural to consider the following problems:

\begin{problem}\label{problem local triple derivations}
Is every bounded local triple derivation on a JB$^*$-triple $E$ a triple derivation?
\end{problem}

\begin{problem}\label{problem non bounded local triple derivations}
Is every local triple derivation on a JB$^*$-triple $E$ a triple derivation?
\end{problem}

\begin{problem}\label{problem local triple derivations continuous}
Is every local triple derivation on a JB$^*$-triple $E$ continuous?
\end{problem}

We note that the above problems also make sense in the setting of C$^*$-algebras and are interesting questions in its own right.

\begin{problem}\label{problem local triple derivations on Cstar}
Is every (bounded) local triple derivation on a C$^*$-algebra $B$ a triple derivation?
\end{problem}

M. Mackey gave in \cite[Theorem 5.11]{Mack} a positive answer to the above Problem \eqref{problem local triple derivations} under the additional hypothesis of $E$ being a JBW$^*$-triple (i.e. a JB$^*$-triple which is also a dual Banach space). Problems \eqref{problem local triple derivations}, \eqref{problem non bounded local triple derivations} and \eqref{problem local triple derivations continuous} already appear in \cite{Mack} and in \cite{BurFerGarPe2012}. Mackey's theorem can be considered an appropriate version of Kadison's theorem in the setting of JB$^*$-triples. Problem \ref{problem local triple derivations on Cstar} was solved by J. Garcés and the authors of this note for bounded local triple derivations on unital C$^*$-algebras in \cite{BurFerGarPe2012}. The above problems have remained open in their full generality until now.\smallskip

In this paper we completely solve Problems \eqref{problem local triple derivations} and \eqref{problem local triple derivations continuous} (and consequently Problems \eqref{problem non bounded local triple derivations} and \eqref{problem local triple derivations on Cstar}). In Section 2 we prove that every continuous local triple derivation on a JB$^*$-triple is a triple derivation (see Theorem \ref{thm local triple derivations are triple derivations}). Our strategy consists in studying the behavior of the bitranspose, $T^{**}$, on compact and range tripotents in $E^{**}$. It follows from our first main result that Problems \eqref{problem local triple derivations continuous} and \eqref{problem non bounded local triple derivations} are equivalent. In a subsequent result, we explore an automatic continuity result for local triple derivations. T. Barton and Y. Friedman proved in \cite{BarFri} that every triple derivation on a JB$^*$-triple is automatically continuous, we shall adapt their arguments to show that the same statement remains valid for local triple derivations (see Theorem \ref{t automatic continuity}). In Section 3, we particularize our results to the setting of C$^*$-algebras and explore the connections between (local) triple derivations and generalised derivations on a C$^*$-algebra. Among the consequences, we establish appropriate generalizations, to general C$^*$-algebras, of results due to V. Shu'lman \cite{Shu} and J. Li and Z. Pan \cite{LiPan} in the context of unital C$^*$-algebras.

\subsection{Preliminaries}

A JB$^*$-triple is a complex Banach space $E$ together with a
continuous triple product $\J ... : E\times E\times E \to E,$
which is conjugate linear in the middle variable and symmetric
bilinear in the outer variables satisfying the following axioms:
\begin{enumerate}[{\rm $(a)$}] \item (Jordan Identity) \begin{equation}\label{Jordan identity} \J ab{\J xyz} = \J {\J abx}yz - \J x{\J bay}z + \J xy{\J abz},
\end{equation} for all $a,b,x,y,z$ in $E$;
\item If $L(a,b)$ denotes the operator on $E$ given by $L(a,b) x = \J abx,$ the mapping $L(a,a)$ is an hermitian operator with non-negative
spectrum; \item $\|\J aaa\| = \|a\|^3$, for every $a\in E$.\end{enumerate} Given $a,b\in E$, the symbol $Q(a,b)$ will denote the conjugate linear operator defined by $Q(a,b) (x) = \J axb$. We shall write $Q(a)$ instead of $Q(a,a)$.\smallskip

Every C$^*$-algebra is a JB$^*$-triple via the triple product given by
\begin{equation}\label{eq C*-triple product} 2 \J xyz = x y^* z +z y^* x,
\end{equation} and every JB$^*$-algebra is a
JB$^*$-triple under the triple product \begin{equation}\label{eq JB*-alg-triple product} \J xyz = (x\circ y^*) \circ
z + (z\circ y^*)\circ x - (x\circ z)\circ y^*.
\end{equation}

Every element $e$ in a JB$^*$-triple $E$ satisfying $\J eee =e$ is called tripotent.
When a C$^*$-algebra, $A$, is regarded as a JB$^*$-triple, the set of tripotents of $A$ is precisely the set of all
partial isometries in $A$.\smallskip

Associated with each tripotent $e$ in a JB$^*$-triple $E$, there is a
decomposition of $E$ (called \emph{Peirce decomposition}) in the
form: $$E=E_0(e)\oplus E_1(e)\oplus E_2(e),$$ where
$E_k(e)=\{x\in E:L(e,e)x=\frac k2 x\}$ for $k=0,1,2$.
The \emph{Peirce rules} are that $$\J {E_{i}(e)}{E_{j} (e)}{E_{k} (e)}\subseteq E_{i-j+k} (e)$$ if $i-j+k \in \{ 0,1,2\},$ and $\J {E_{i}(e)}{E_{j} (e)}{E_{k} (e)}=\{0\}$ otherwise. Moreover, $\J {E_{2} (e)}{E_{0}(e)}{E} = \J {E_{0} (e)}{E_{2}(e)}{E} =\{0\}.$\smallskip

The Peirce space $E_2 (e)$ is a unital JB$^*$-algebra with unit $e$,
product $x\circ_e y := \J xey$ and involution $x^{*_e} := \J
exe$, respectively. The corresponding \emph{Peirce projections}, $P_{i} (e) : E\to
E_{i} (e)$, $(i=0,1,2)$ are given by $$P_2 (e) = Q(e)^2, \ P_1 (e) = 2 L(e,e) - 2 Q (e)^2, \ \hbox{ and } P_0 (e) = Id-2 L(e,e) + Q (e)^2,$$ where $Id$ is the
identity map on $E$.\smallskip

A JBW$^*$-triple is a JB$^*$-triple which is also a dual Banach space
(with a unique isometric predual  \cite{BarTi}). Since the triple product of every JBW$^*$-triple
is separately weak$^*$ continuous (cf. \cite{BarTi}), it can be easily checked that Peirce projections associated with a tripotent $e$ in a JBW$^*$-triple are weak$^*$ continuous. The second dual, $E^{**}$, of a JB$^*$-triple $E$ is a JBW$^*$-triple \cite{Di86b}.\smallskip

Following standard notation, for each element $a$ in a JB$^*$-triple $E$ we denote $a^{[1]} =
a$ and $a^{[2 n +1]} := \J a{a^{[2n-1]}}a$ $(\forall n\in \mathbb{N})$. It follows from Jordan identity that JB$^*$-triples
are power associative, that is, $\J{a^{[k]}}{a^{[l]}}{a^{[m]}}=a^{[k+l+m]}$. The JB$^*$-subtriple of $E$ generated by the element $a$ will be denoted by the symbol $E_a,$ and coincides with the closed linear span of the elements $a^{[2n-1]}$ with $n\in \mathbb{N}$. The \emph{local Gelfand theory}\label{local Gelfand theory}  for JB$^*$-triples assures that $E_a$ is JB$^*$-triple isomorphic (and hence isometric) to $C_0 (L)$ for some locally compact Hausdorff space $L\subseteq (0,\|a\|],$ such that $L\cup \{0\}$ is compact and $\|a\| \in L$. It is further known that there exists a triple isomorphism $\Psi$ from $E_a$ onto $C_{0}(L),$ satisfying $\Psi (a) (t) = t$ $(t\in L)$ (compare \cite[Lemma 1.14]{Ka}). This result provides us with an spectral resolution for every element in a JB$^*$-triple $E$.\smallskip

The local Gelfand theory for JB$^*$-triples is a powerful tool; among its consequences, it follows that for an element $a$ in a JB$^*$-triple $E$ and
each natural $n$, there exists (a unique) element $a^{[{1}/({2n-1})]}$ in
$E_a$ satisfying $(a^{[{1}/({2n-1})]})^{[2n-1]} = a.$ When $a$ is a
norm one element, the sequence $(a^{[{1}/({2n-1})]})$ converges in
the weak$^*$ topology of $E^{**}$ to a tripotent denoted by $r(a)$
and called the \emph{range tripotent} of $a$. The tripotent $r(a)$
is the smallest tripotent $e$ in $E^{**}$ satisfying that $a$ is
positive in the JBW$^*$-algebra $E^{**}_{2} (e)$. It is also known
that the sequence $(a^{[2n -1]})$ converges in the
weak$^*$ topology of $E^{**}$ to a tripotent (called the
\hyphenation{support}\emph{support} \emph{tripotent} of $a$)
$s(a)$ in $E^{**}$, which satisfies $ s(a) \leq a \leq r(a)$ in
$A^{**}_2 (r(a))$ (compare \cite[Lemma 3.3]{EdRu88}; the reader should be noted that in \cite{EdRu96}, $r(a)$ is called the support tripotent of
$a$).\smallskip

Two elements $a,b$ in a JB$^*$-triple $E$ are said to be
\emph{orthogonal} (written $a\perp b$) if $L(a,b) =0$. It is known that $a\perp b$ if, and only if, one of the following statements holds:
\begin{center}
\begin{tabular}{ccc}\label{ref orthogo}
  $b\perp a;$ & $\J aab =0;$ & $a \perp r(b);$  \\
  & & \\
  $r(a) \perp r(b);$ & $E^{**}_2(r(a)) \perp E^{**}_2(r(b))$; & $r(a) \in E^{**}_0 (r(b))$; \\
  & & \\
  $a \in E^{**}_0 (r(b))$; & $b \in E^{**}_0 (r(a))$; & $E_a \perp E_b$, \\
\end{tabular}
\end{center}
(see, for example, \cite[Lemma 1]{BurFerGarMarPe}). The natural partial order in the set of tripotents of a JB$^*$-triple $E$ is defined as follows: given two tripotents $e$ and $u$ in $E$ we say that $u \leq e$ if $e-u$ is a tripotent in $E$ and $e-u \perp u$. \smallskip

Range and support tripotent are examples of open and compact tripotents, respectively. In attempt to generalize the studies of C.A. Akemann, L.G. Brown, and G.K. Pedersen on open and compact projections in the bidual of a C$^*$-algebra (\cite{Ak69,Ak70,Ak71,Bro,AP}), C.M. Edwards and G.T. R{\"u}ttimann introduce in \cite{EdRu96} the notions of \emph{open} and \emph{compact} tripotents in the bidual of a JB$^*$-triple. We recall that a tripotent $u$ in the bidual of a JB$^*$-triple $E$ is said to be \emph{open} when $E^{**}_2 (u)\cap E$ is weak$^*$ dense in $E^{**}_2 (u)$. A tripotent $e$ in $E^{**}$ is said to be \emph{compact-$G_{\delta}$} (relative to $E$) if there exists a norm one element $a$ in $E$ such that $e$ coincides with $s(a)$, the support tripotent of $a$. A tripotent $e$ in $E^{**}$ is said to be \emph{compact} (relative to $E$) if there exists a decreasing net $(e_{\lambda})$ of tripotents in $E^{**}$ which are compact-$G_{\delta}$ with infimum $e$, or if $e$ is zero. Closed and bounded tripotents in $E^{**}$ were introduced and studied by the second and third authors of this note in \cite{FerPe06} and \cite{FerPe07}.  A tripotent $e$ in $E^{**}$ such that $E^{**}_0 (e)\cap E$ is weak$^*$ dense in $E^{**}_0(e)$ is called \emph{closed} relative to $E$. When there exists a norm one element $a$ in $E$ such that $a= e + P_0 (e) (a)$, the
tripotent $e$ is called \emph{bounded} (relative to $E$) and we shall write $e\leq_{_T} a$ (c.f. \cite{FerPe06}). The relation
$\leq_{_T}$ is consistent with the natural partial order on the set of tripotents, that is, for any two tripotents $e$ and $u$ we
have $e\leq u$ if and only if $e\leq_{_T} u.$ An useful result established in \cite[Theorem 2.6]{FerPe06} (see also \cite[Theorem 3.2]{FerPe10b}) asserts that a tripotent $e$ in $E^{**}$ is compact if, and only if, $e$ is closed and bounded.\smallskip

Given a JBW$^*$-triple $W,$ a norm one functional $\varphi$ in $W_*$
and a norm one element $z$ in $W$ with $\varphi (z) =1$, Proposition 1.2 in \cite{BarFri} assures that the mapping
$$(x,y)\mapsto \varphi\J xyz$$ is a positive sesquilinear
form on $W.$ Furthermore, for every norm one element $w$ in $W$
satisfying $\varphi (w) =1$, we have $\varphi\J xyz = \varphi\J
xyw,$ for all $x,y\in W$. Thus, the mapping $ x\mapsto \|x\|_{\varphi}:=
\left(\varphi\J xxz\right)^{\frac{1}{2}},$ is a prehilbertian
seminorm on $W$. The strong*-topology 
is the topology on $W$ generated by the family $\{
\|\cdot\|_{\varphi}:\varphi\in {W_*}, \|\varphi \| =1 \}$ (c.f.
\cite{BarFri}). For later purposes, we recall that the triple product of a JBW$^*$-triple is jointly
strong$^*$-continuous on bounded sets (see \cite[Theorem 9]{PeRo}). Since the strong*-topology of a JBW$^*$-triple $W$ is compatible with the duality $(W, W_{*})$ (cf. \cite[Corollary 9]{PeRo}), it follows from the bipolar theorem that for each convex $C\subseteq W$ we have 
$$\overline{C}^{\sigma(W,W_*)} = \overline{C}^{S^*(W,W_*)}.$$
We shall also make use of the following fact due to L. Bunce (see \cite{Bun01}): Let $F$ be a JBW$^*$-subtriple (i.e. a weak$^*$ closed JB$^*$-subtriple) of a JBW$^*$-triple $W$, then the strong$^*$-topology of $F$ coincides with the restriction to $F$ of the strong$^*$-topology of $W$, that is, $S^*(F,F_*) = S^* (W,W_*)|_{F}$.\smallskip

Given a Banach space $X$, we habitually regard $X$ as being contained in
$X^{**}$ and we identify the weak$^*$ closure, in $X^{**}$, of a closed subspace $Y$ of $X$ with $Y^{**}$. Let $F$ be a JB$^*$-subtriple of a JB$^*$-triple $E$ and let $e$ be a tripotent in $F^{**} \equiv \overline{F}^{\sigma(E^{**},E^*)} \subseteq E^{**}$. Corollary 2.9 in \cite{FerPe06} and \cite[Proposition 3.3]{FerPe10b} prove that $u$ is compact in $F^{**}$ if, and only if, $u$ is compact in $E^{**}$.\smallskip

The following lemma, whose statement is required later, can be directly deduced from the joint strong$^*$-continuity of the triple product of every JBW$^*$-triple on bounded sets and the definition of the corresponding Peirce projection.

\begin{lemma}\label{l Peirce zero strong* limits} Let $W$ be a JBW$^*$-triple. Suppose that $(e_\lambda)$ is a net (or a sequence) of tripotents in $W$ converging, in the strong$^*$-topology of $W$ to a tripotent $e$ in $W$. Let $(x_{\mu})$ be a net (or a sequence) in $W$, converging to some $x\in W$ in the strong$^*$-topology. Then, for each $i\in \{0,1,2\}$, the net (sequence) $P_i (e_\lambda) (x_\mu)$ tends to $P_i (e) (x)$. $\hfill\Box$
\end{lemma}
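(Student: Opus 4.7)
The plan is to write each Peirce projection as an explicit triple-product polynomial in $e$, and then to reduce the claim to the joint strong$^*$-continuity of the triple product on bounded sets, invoked a finite number of times.

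First I would recall the formulas from the preliminaries, namely
$$P_2(e)(x) = Q(e)^2(x) = \J e{\J exe}e, \qquad L(e,e)(x) = \J eex,$$
together with
$$P_1(e)(x) = 2\,\J eex - 2\,\J e{\J exe}e, \qquad P_0(e)(x) = x - 2\,\J eex + \J e{\J exe}e.$$
Thus it suffices to establish, in the strong$^*$-topology, the two convergences
$$\J{e_\lambda}{e_\lambda}{x_\mu} \longrightarrow \J eex \qquad\text{and}\qquad \J{e_\lambda}{\J{e_\lambda}{x_\mu}{e_\lambda}}{e_\lambda} \longrightarrow \J e{\J exe}e,$$
since combining these via linearity of the strong$^*$-limit (together with the given $x_\mu\to x$) yields $P_i(e_\lambda)(x_\mu)\to P_i(e)(x)$ for every $i\in\{0,1,2\}$.

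Second, I would verify the two convergences above by two successive applications of the joint strong$^*$-continuity of the triple product on bounded sets (\cite[Theorem 9]{PeRo}). The tripotents $e_\lambda$ lie in the closed unit ball of $W$, and the (implicitly bounded) net $(x_\mu)$ is bounded, so the first convergence follows directly. For the second, the auxiliary net $\J{e_\lambda}{x_\mu}{e_\lambda}$ converges strong$^*$ to $\J exe$ by the same reasoning, and it is uniformly bounded by $\sup_\mu\|x_\mu\|$ thanks to the standard estimate $\|\J abc\|\le \|a\|\|b\|\|c\|$ with $\|e_\lambda\|\le 1$; hence a second application of joint strong$^*$-continuity on bounded sets delivers the required limit.

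I do not foresee any serious obstacle: the argument is a mechanical substitution of the Peirce formulas into the continuity statement. The only point of care is the verification that every intermediate net to which joint continuity is applied stays bounded, which is automatic from $\|e_\lambda\|\le 1$ and the contractivity of the triple product.
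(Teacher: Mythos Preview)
Your proposal is correct and follows exactly the approach the paper indicates: the paper gives no detailed proof, merely stating that the lemma ``can be directly deduced from the joint strong$^*$-continuity of the triple product of every JBW$^*$-triple on bounded sets and the definition of the corresponding Peirce projection,'' and you have spelled out precisely this deduction via the explicit formulas $P_2(e)=Q(e)^2$, $P_1(e)=2L(e,e)-2Q(e)^2$, $P_0(e)=Id-2L(e,e)+Q(e)^2$. Your observation that the net $(x_\mu)$ must be (implicitly) bounded is exactly right and matches how the lemma is applied throughout the paper, where the relevant nets are always of the form $T^{**}(e_\lambda)$ with $\|e_\lambda\|\le 1$ and $T^{**}$ bounded.
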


\section{Local triple derivations on JB$^*$-triples are triple derivations}

A triple derivation on a JB$^*$-triple $E$ is a linear mapping $\delta: E\to E$ satisfying that $$\delta \J abc = \J {\delta(a)}bc + \J a{\delta(b)}c + \J ab{\delta(c)},$$ for every $a,b,c\in E$. The Jordan identity implies that, for each $a,b$ in $E$, the mapping $\delta(a,b)= L(a,b)-L(b,a)$ is a triple derivation on $E$. Every triple derivation on a JB$^*$-triple is continuous (cf. \cite[Corollary 2.2]{BarFri} and \cite[Corollary 10]{PeRu}). The separate weak$^*$ continuity of the triple product of $E^{**}$ together with Goldstine's theorem, imply that \begin{equation}\label{eq bitranspose of a derivation} \delta^{**} \hbox{ is a triple derivation on } E^{**} \hbox{ whenever $\delta$ is a triple derivation on $E$.}
\end{equation}

This section contains the main result of the paper, which asserts that every continuous local triple derivation $T$ on a JB$^*$-triple $E$ is a derivation. Our strategy will consist in studying the behavior of $T^{**}$ on compact and range tripotents in $E^{**}$. We start with a technical lemma borrowed from \cite{BurFerGarPe2012}.

\begin{lemma}\label{l 4 BurFerGarPe}\cite[Lemma 4]{BurFerGarPe2012} Let $F$ be a JB$^*$-subtriple of a JB$^*$-triple $E$, where the latter is regarded as a Jordan Banach triple $F$-module with respect to its natural triple product. Let $T: F \to E$ be a local triple derivation. Then the products of the form $\J a{T(b)}c$ vanish for every $a,b,c$ in $F$ with $a, c\perp b$. $\hfill\Box$
\end{lemma}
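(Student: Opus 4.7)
The plan is to exploit the triple derivation $\delta_b: F \to E$ provided by the local triple derivation hypothesis at the element $b$, so that $T(b) = \delta_b(b)$, and apply it to the triple product $\J abc$, which vanishes by orthogonality.

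First I would record the two consequences of orthogonality that we need. Since $a \perp b$ means $L(a,b) = 0$, we have $\J abx = 0$ for every $x \in E$; by the symmetry of the triple product in its outer variables, also $\J xba = 0$ for every $x$. Likewise $c \perp b$ gives $\J xbc = \J cbx = 0$ for every $x$.

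Next, because $a \perp b$ we have $\J abc = 0$. Fix a triple derivation $\delta_b: F \to E$ (viewing $E$ as a Jordan Banach triple $F$-module) with $\delta_b(b) = T(b)$, which exists by hypothesis. Applying $\delta_b$ to the identity $\J abc = 0$ and using the derivation rule yields
\begin{equation*}
0 = \delta_b \J abc = \J {\delta_b(a)}bc + \J a{\delta_b(b)}c + \J ab{\delta_b(c)}.
\end{equation*}
The first summand equals $\J cb{\delta_b(a)}$ by symmetry in the outer variables, and this is zero because $c \perp b$; the third summand vanishes directly because $a \perp b$. Consequently the middle term is zero, which is precisely $\J a{T(b)}c = 0$.

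There is no real obstacle here: the whole argument is the standard trick of feeding an already vanishing triple product into a pointwise-chosen triple derivation and reading off what must survive on the other side. The only subtle point is to remember that orthogonality in a JB$^*$-triple is strong enough, via the symmetry of $\J{\cdot}{\cdot}{\cdot}$ in its outer variables, to annihilate any Jordan triple product containing the orthogonal pair in the outer-middle positions, regardless of what sits in the third slot.
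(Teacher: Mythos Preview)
Your proof is correct. The paper itself does not supply a proof of this lemma (it is quoted from \cite{BurFerGarPe2012} and closed with a box), so there is no in-paper argument to compare against; your approach---apply the triple derivation $\delta_b$ with $\delta_b(b)=T(b)$ to the vanishing product $\J abc$ and use $L(a,b)=L(c,b)=0$ (together with outer-variable symmetry) to kill the two flanking Leibniz terms---is the natural one and almost certainly what the cited reference does.
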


We shall survey now some of the properties of triple derivations. Let $\delta : E \to E $ be a triple derivation on a JB$^*$-triple. Suppose that $e$ is a tripotent in $E$. In such a case, $$\delta (e) = \delta \J eee = 2 \J {\delta(e)}ee + \J e{\delta(e)}e = 2 P_2 (e) (\delta(e)) +  P_1 (e) (\delta(e)) + Q (e) (\delta(e))$$
$$= 2 P_2 (e) (\delta(e)) +  P_1 (e) (\delta(e)) + \Big(P_2 (e) (\delta(e))\Big)^{*_{e}}, $$ which implies that
\begin{equation}\label{eq triple derivations on tripotents} P_0 (e) (\delta(e))=0 \hbox{ and } P_2 (e) (\delta(e))= - \Big(P_2 (e) (\delta(e))\Big)^{*_{e}} = - Q (e) (\delta(e)).
\end{equation} 
If $T : E \to E$ is merely a local triple derivation on $E$, we can find a triple derivation $\delta_e : E \to E$ such that $T(e) = \delta_e (e)$, which gives \begin{equation}\label{eq local triple derivations on tripotents} P_0 (e) (T(e))=0 \hbox{ and } P_2 (e) (T(e))= - \Big(P_2 (e) (T(e))\Big)^{*_{e}} = - Q (e) (T(e)).
\end{equation}

Though a JB$^*$-triple $E$ need not contain, in general, tripotent elements, its bidual has a rich set of tripotents. The next proposition explains the behavior of a continuous local triple derivation on a JB$^*$-triple $E$ on compact tripotents in $E^{**}.$

\begin{proposition}\label{p loal triple derivations on compact tripotents} Let $T : E \to E$ be a bounded local triple derivation on a JB$^*$-triple. Suppose $e$ is a compact tripotent in $E^{**}.$ Then the following statements hold: \begin{enumerate}[$(a)$]
\item $P_0 (e) T^{**}(e) =0$;
\item If $a$ is a norm one element in $E$ whose support tripotent is $e$ (that is, $e$ is a compact-$G_{\delta}$ tripotent), then $Q (e) T(a) = Q (e) T^{**} (e);$
\item $P_2(e) T^{**}(e) = - Q(e) (T^{**}(e)).$
\end{enumerate}
\end{proposition}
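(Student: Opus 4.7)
The natural strategy is to first prove the three claims when $e$ is a compact-$G_{\delta}$ tripotent, $e=s(a)$ with $\|a\|=1$, and then extend (a) and (c) to an arbitrary compact tripotent by approximating $e$ from above by a decreasing net $(e_{\lambda})$ of compact-$G_{\delta}$ tripotents (such a net exists by the very definition of compactness). The net $(e_{\lambda})$ converges to $e$ in the strong$^{*}$-topology of $E^{**}$, so Lemma \ref{l Peirce zero strong* limits} together with the weak$^{*}$-continuity of $T^{**}$ transfers (a) and (c) from the $e_{\lambda}$ to $e$.

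For the $G_{\delta}$ case, the key tool is the triple derivation $\delta=\delta_{a}:E\to E$ provided by the local derivation property, satisfying $\delta(a)=T(a)$. Since $\delta^{**}$ is a bona fide triple derivation on the JBW$^{*}$-triple $E^{**}$, equation \eqref{eq triple derivations on tripotents} applied to $\delta^{**}$ at the tripotent $e$ delivers $P_{0}(e)\delta^{**}(e)=0$ and $P_{2}(e)\delta^{**}(e)=-Q(e)\delta^{**}(e)$. In the commutative model $E_{a}\cong C_{0}(L)$ (where $a$ corresponds to $\mathrm{id}_{L}$ and $e$ to $\chi_{\{1\}}$) one has $\J eae=Q(e)a=e$, and the decomposition $a=e+(a-e)$ with $a-e\in E^{**}_{0}(e)$. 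Applying the derivation identity of $\delta^{**}$ to this relation, and exploiting the universal Peirce identity $\J{E^{**}_{2}(e)}{E^{**}_{2}(e)}{E^{**}_{0}(e)}=0$ (a consequence of $E^{**}_{2}(e)\perp E^{**}_{0}(e)$), yields the relation $Q(e)T(a)=Q(e)\delta^{**}(e)$ after a short Peirce-arithmetic calculation.

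To pass from $\delta^{**}(e)$ to $T^{**}(e)$, write $T^{**}(e)$ as the weak$^{*}$-limit of $T(a^{[2n-1]})$, and for each $n$ select a triple derivation $\delta_{n}$ with $\delta_{n}(a^{[2n-1]})=T(a^{[2n-1]})$. The same argument as for $a$ (using that $a^{[2n-1]}$ still has support $e$ and norm one) identifies $Q(e)T(a^{[2n-1]})$ with $Q(e)\delta_{n}^{**}(e)=-P_{2}(e)\delta_{n}^{**}(e)$, and a parallel Peirce analysis of $\J ee{a^{[2n-1]}}=e$ gives the corresponding Peirce-$2$ identity $P_{2}(e)T(a^{[2n-1]})=P_{2}(e)\delta_{n}^{**}(e)$. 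Combining these and passing to the weak$^{*}$-limit delivers $P_{2}(e)T^{**}(e)=-Q(e)T^{**}(e)$, i.e.\ (c). For (b), one applies Lemma \ref{l 4 BurFerGarPe} (extended by weak$^{*}$-continuity) to $y=a-a^{[2n-1]}\in E$, which is orthogonal to $e$ in $E^{**}$ (the commutative model shows $e\cdot(a-a^{[2n-1]})=0$), to obtain $Q(e)T(a-a^{[2n-1]})=0$ for every $n$; passing to the weak$^{*}$-limit yields $Q(e)T(a)=Q(e)T^{**}(e)$. Part (a) follows from the parallel Peirce-$0$ calculation, combined with the strong$^{*}$-convergence $a^{[2n-1]}\to e$ in $E^{**}$ (which holds in the commutative model and is preserved on passage to JBW$^{*}$-subtriples by Bunce's theorem) and a further use of Lemma \ref{l 4 BurFerGarPe}.

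The main obstacle is the weak$^{*}$-limit step: the derivations $\delta_{n}$ depend on $n$ with no a~priori uniform norm bound, and extending Lemma \ref{l 4 BurFerGarPe} from elements of $E$ to tripotents of $E^{**}$ requires a careful density/continuity argument. The payoff for doing this work is that the Peirce components of $T(a^{[2n-1]})$ are forced by the derivation identity into specific pieces of $\delta_{n}^{**}(e)$, so that weak$^{*}$-convergence of $T(a^{[2n-1]})$ to $T^{**}(e)$ automatically yields weak$^{*}$-convergence of those pieces to the corresponding pieces of $T^{**}(e)$.
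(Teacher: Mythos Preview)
Your plan for parts (b) and (c) is essentially sound and runs close to the paper's argument, though by a slightly different route. For (c) you bypass (b)-for-$T$ by working with the sequence $(\delta_n)$ directly and passing to the weak$^*$-limit; the paper instead proves (b) first (via a functional-calculus construction) and then derives (c) from (b) together with a single $\delta_a$. One warning about your justification ``Lemma~\ref{l 4 BurFerGarPe} extended by weak$^*$-continuity'': you cannot approximate $e$ from inside $E$ by elements orthogonal to $y=a-a^{[2n-1]}$; in the model $E_a\cong C_0((0,1])$ with $1$ non-isolated, the only element of $E$ orthogonal to $y$ is $0$. The correct fix is to observe that the \emph{proof} of Lemma~\ref{l 4 BurFerGarPe} carries over once $\delta_y$ is replaced by the genuine derivation $\delta_y^{**}$ on $E^{**}$: since $e\perp y$ gives $L(e,y)=0$, applying $\delta_y^{**}$ to $\J eye=0$ yields $Q(e)T(y)=Q(e)\delta_y(y)=0$ directly.

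Part (a), however, has a genuine gap. The ``parallel Peirce-$0$ calculation'' fails: applying $P_0(e)$ to the derivation identity for $\delta_n^{**}$ on $\J ee{a^{[2n-1]}}=e$ (or on $\J e{a^{[2n-1]}}e=e$, or on $\J{a^{[2n-1]}}e{a^{[2n-1]}}=e$) produces only the tautology $0=0$, because every term already lands in $E^{**}_2(e)\oplus E^{**}_1(e)$. There is no identity of this type linking $P_0(e)T(a^{[2n-1]})$ to $P_0(e)\delta_n^{**}(e)$. Nor does Lemma~\ref{l 4 BurFerGarPe} (even in the extended form above) help: to get $\J z{T(a^{[2n-1]})}w=0$ for $z,w\in E^{**}_0(e)$ you would need $z,w\perp a^{[2n-1]}$, but $a^{[2n-1]}=e+(a^{[2n-1]}-e)$ has a nonzero Peirce-$0$ component, so orthogonality to $e$ does not suffice. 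The paper's remedy is to replace $(a^{[2n-1]})$ by a functional-calculus sequence $(b_n)\subset E_a$ of norm-one elements converging strong$^*$ to $e$ which are equal to $1$ on a shrinking neighbourhood of $t=1$, so that the support tripotents $s(b_n)$ are compact tripotents \emph{strictly larger} than $e$ with $b_{n+1}\in E^{**}_2(s(b_n))$. Then $s(b_n)$ is closed, so $E^{**}_0(s(b_n))\cap E$ is strong$^*$-dense in $E^{**}_0(s(b_n))$, and every element there is genuinely orthogonal (in $E$) to $b_{n+1}$; Lemma~\ref{l 4 BurFerGarPe} now applies verbatim, and a double strong$^*$-limit gives $P_0(e)T^{**}(e)=0$. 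Your sequence $(a^{[2n-1]})$ cannot be made to play this role, since $s(a^{[2n-1]})=e$ for every $n$.
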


\begin{proof}
$(a)$ Let us assume that $e$ is a compact-$G_{\delta}$ tripotent. So, there exists a norm one element $a$ in $E$ such that $s(a) =e$. Let $E_a$ denote the JB$^*$-subtriple of $E$ generated by $a$. We have already mentioned that there exists a subset $L\subseteq (0,1]$ with $1\in \{0\}\cup L$ compact and a triple isomorphism $\Psi$ from $E_a$ onto $C_0(L)$ such that $\Psi (a) (t) =t,$ $\forall t\in L$ (see page \pageref{local Gelfand theory}). Pick a sequence of norm one elements $(b_n)$ in $E_a$ such that $b_n= e + P_0 (e)(b_n),$ $\J {b_{n}}{b_{n+1}}{b_{n}}= \J {b_{n}}{b_{n}}{b_{n+1}} = b_{n+1}$, $(b_n) \to e,$ in the strong$^*$-topology of $E^{**}$ (take, for example, $$b_{n} (t):=\left\{%
\begin{array}{ll}
    0, & \hbox{if $t\in L\cap [0,1-\frac1n ]$;} \\
    n(n+1) (t-1+\frac1n ), & \hbox{if $t\in L\cap [1-\frac1{n},1-\frac1{n+1} ]$;} \\
    1, & \hbox{if $t\in L\cap [1-\frac1{n+1},1 ]$.} \\
\end{array}%
\right).$$

Fix a natural $n.$ Since, the support tripotent of $b_n,$ $s(b_n)$, is a compact tripotent in $E^{**}$, given $z,w\in E_0^{**} (s(b_n))$ we can find (bounded) nets $(c_\mu)$ and $(d_{\nu})$ in $E_0^{**} (s(b_n)) \cap E$ converging to $z$ and $w$ in the strong$^*$-topology of $E^{**},$ respectively. Clearly, $c_\mu, d_{\nu} \perp b_{n+1}$ for every $\mu$ and $\nu$, and hence, by Lemma \ref{l 4 BurFerGarPe}, $$\J {c_\mu}{T(b_{n+1})}{d_\nu} =0 \ \ (\forall \mu, \nu).$$ Taking strong$^*$-limits in $\mu$ and $\nu$ we have $$\J {z}{T(b_{n+1})}{w} =0, \hbox{ for every } n\in \mathbb{N}, z,w\in E_0^{**} (s(b_n)),$$ equivalently, $$\J {P_0^{**} (s(b_n)) (x)}{T(b_{n+1})}{P_0^{**} (s(b_n))(y)} =0, \hbox{ for every } n\in \mathbb{N}, x,y\in E^{**}.$$

Now, since the triple product of $E^{**}$ is jointly strong$^*$-continuous and $T^{**}$ is strong$^*$-continuous, we can take strong$^*$-limit in the above expression, and by Lemma \ref{l Peirce zero strong* limits}, we have $\J {P_0^{**} (e)(x)}{T^{**}(e)}{P_0^{**} (e)(y)}=0,$ for every $x,y\in E^{**}$. It follows, for example, from Peirce arithmetic and the third axiom in the definition of JB$^*$-triples, that $P_0^{**} (e) T^{**}(e)=0$.\smallskip

Let us consider a compact tripotent $e\in E^{**}.$ By definition, there exists a decreasing
net $(e_{\lambda})$ of compact-$G_{\delta}$ tripotents in  $E^{**}$ converging to $e$ in the strong$^*$-topology of $E^{**}$. From the above argument, $P_0 (e_{\lambda}) T^{**}(e_{\lambda}) =0$ ($\forall \lambda$), and by Lemma \ref{l Peirce zero strong* limits}, $P_0 (e) T^{**}(e) =0$, as we desired.\smallskip

$(b)$ Again, let $a$ be a norm one element in $E$ such that $s(a) = e $. Let us denote $a_0 = P_0 (e)(a)$. Adapting the previous arguments, we consider the JB$^*$-subtriple, $E_a,$ generated by $a$, and pick two sequences $(a_n)$ and $(b_n)$ in the closed unit ball of $E_a$ defined by $$a_{n} (t):=\left\{%
\begin{array}{ll}
    t, & \hbox{if $t\in L\cap [0,1-\frac1n]$;} \\
    - n(n+1) (t-1+\frac1{n+1} ), & \hbox{if $t\in L\cap [1-\frac1{n},1-\frac1{n+1} ]$;} \\
    0, & \hbox{if $t\in L\cap [1-\frac1{n+1},1 ]$} \\
\end{array}%
\right. ,$$ and $$b_{n} (t):=\left\{%
\begin{array}{ll}
    0, & \hbox{if $t\in L\cap [0, 1-\frac1{n+1}]$;} \\
    (n+1) (t-1+\frac1{n+1}), & \hbox{if $t\in L\cap [1-\frac1{n+1},1 ]$.} \\
\end{array}%
\right.$$ Clearly, $a_n\perp b_n$ ($\forall n$), $(a_n)\to a_0$ and $(b_n) \to e$ in the strong$^*$-topology of $E^{**}.$ Lemma \ref{l 4 BurFerGarPe} assures that $$\J {b_n}{T(a_n)}{b_n} = 0 \ \ (\forall n\in \mathbb{N}).$$ Taking strong$^*$ limits in the above expression we have $\J e{T^{**} (a_0)}e =0$, and hence $\J e{T^{**} (a)}e = \J e{T^{**} (e)}e.$\smallskip

$(c)$ Assume, one more time, that $e$ is a compact-$G_{\delta}$ tripotent and $a$ is a norm one element in $E$ such that $s(a) = e $. Since $T$ is a local triple derivation, we can find a triple derivation $\delta_{a}: E\to E$ such that $T(a) = \delta_{a} (a).$ We notice that $\delta_a^{**} : E^{**}\to E^{**}$ is a triple derivation on $E^{**}$ (see \eqref{eq bitranspose of a derivation}). Since $\delta_a$ is triple derivation, the identity in $(b)$ also holds whenever we replace $T$ with $\delta_a.$ Therefore, $$P_2(e) T^{**} (e) =  P_2 (e) T(a) = P_2(e) \delta_a (a) = P_2 (e) \delta_a^{**} (e) $$
$$\hbox{(by $(\ref{eq triple derivations on tripotents})$)} = - Q(e) \Big(\delta_a^{**} (e)\Big) = - Q(e) \Big(\delta_a (a)\Big)  = - Q(e) \Big(T (a)\Big) = - Q(e) \Big(T^{**} (e) \Big),$$ which proves statement $(c)$ for compact-$G_{\delta}$ tripotents in $E^{**}$.\smallskip

Let us consider a decreasing net $(e_{\lambda})$ of compact-$G_{\delta}$ tripotents in $E^{**}$ converging to $e$ in the strong$^*$-topology of $E^{**}$. Since, for each $\lambda$, $P_2(e_{ \lambda}) T^{**} (e_{\lambda}) = - Q(e_{\lambda}) (T^{**}(e_{\lambda}))$, Lemma \ref{l Peirce zero strong* limits}, assures the desired identity for $e$.\end{proof}

In the hypothesis of the above proposition, let $a$ be a norm one element in $E$ and let $E_a$ denote the JB$^*$-subtriple of $E$ generated by $a$. By the Gelfand theory for commutative JB$^*$-triples, there exists a subset $L\subseteq (0,1]$ with $1\in \{0\}\cup L$ compact and a triple isomorphism $\Psi$ from $E_a$ onto $C_0(L)$ such that $\Psi (a) (t) =t$ ($\forall t\in L$). Clearly, the range tripotent of $a$ can be approximated, in the strong$^*$ topology of $E^{**}$, by a sequence $(e_n)$ of compact-$G_{\delta}$ tripotents in $E^{**}$, that is, $(e_n) \to r(a)$ in the strong$^*$-topology. Since, by the above Proposition \ref{p loal triple derivations on compact tripotents}, $$P_0 (e_n) T^{**}(e_n) =0,$$ and $$P_2(e_n) T^{**}(e_n) = - Q(e_n) (T^{**}(e_n)),$$ taking strong$^*$-limit in $n$ we deduce, by Lemma \ref{l Peirce zero strong* limits}, that \begin{equation}\label{eq range tripotents in bidual}P_0 (r(a)) T^{**}(r(a)) =0, \hbox{ and, } P_2(r(a)) T^{**}(r(a)) = - Q(r(a)) (T^{**}(r(a))).
\end{equation}

Let $e$ be a compact or a range tripotent in $E^{**}$. It follows from Proposition \ref{p loal triple derivations on compact tripotents} and ($\ref{eq range tripotents in bidual}$) that $$T^{**} \J eee = T^{**} (e) = P_2(e) T^{**} (e) + P_1 (e) T^{**} (e),$$
$$2 \J ee{T^{**} (e)} = 2 P_2(e) T^{**} (e) + P_1 (e) T^{**} (e),$$ and $$\J e{T^{**}(e)}e = Q(e) T^{**} (e) = -P_2 (e) T^{**} (e),$$ which assures that \begin{equation}
\label{eq T on compact and range tripotents is a triple derivation} T^{**} \J eee = 2 \J ee{T^{**} (e)} + \J e{T^{**}(e)}e.
\end{equation}

\begin{corollary}\label{c bitranspose compact range tripotents} Let $T : E \to E$ be a continuous local triple derivation on a JB$^*$-triple. Suppose that $e$ and $u$ are two orthogonal compact tripotents in $E^{**}$, $r_1$ and $r_2$ are two orthogonal range tripotent in $E^{**}$ and $e\perp r_2$. Then the following identities hold: $$T^{**} \J {e\pm u}{e\pm u}{e\pm u} = 2 \J {e\pm u}{e\pm u}{T^{**} (e\pm u)}+ \J {e\pm u}{T^{**}(e\pm u)}{e\pm u},$$ $$T^{**} \J {r_1\pm r_2}{r_1\pm r_2}{r_1\pm r_2} = 2 \J {r_1\pm r_2}{r_1\pm r_2}{T^{**} (r_1\pm r_2)}$$ $$ + \J {r_1\pm r_2}{T^{**}(r_1\pm r_2)}{r_1\pm r_2};$$ and
 $$T^{**} \J {e\pm r_2}{e\pm r_2}{e\pm r_2} = 2 \J {e\pm r_2}{e\pm r_2}{T^{**} (e\pm r_2)}$$ $$ + \J {e\pm r_2}{T^{**}(e\pm r_2)}{e\pm r_2}.$$
\end{corollary}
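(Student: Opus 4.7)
The plan is to reduce each of the three identities to equation \eqref{eq T on compact and range tripotents is a triple derivation}, which was already established for any single compact or range tripotent $v\in E^{**}$. The task then reduces to identifying each of the compound tripotents $e\pm u$, $r_{1}\pm r_{2}$ and $e\pm r_{2}$ either as a compact tripotent, as a range tripotent, or as a strong$^*$-limit of such.

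For $v=e\pm u$, I first verify that $-u$ is compact: the identities $L(-u,-u)=L(u,u)$ and $Q(-u)=Q(u)$ show that $u$ and $-u$ share the same Peirce decomposition, so $-u$ is closed iff $u$ is; and a norm one $a\in E$ witnessing boundedness of $u$ via $a=u+P_{0}(u)(a)$ produces $-a=-u+P_{0}(-u)(-a)$ witnessing boundedness of $-u$. By the ``closed plus bounded equals compact'' characterisation \cite[Theorem 2.6]{FerPe06}, $-u$ is compact. Since $e\perp u$ entails $e\perp -u$, both $e+u$ and $e-u$ are orthogonal sums of two compact tripotents and are therefore again compact, by the closure of the family of compact tripotents under orthogonal sums (a closure property going back to \cite{EdRu96}). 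Applying \eqref{eq T on compact and range tripotents is a triple derivation} to the compact tripotent $e\pm u$ yields the first identity.

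For $v=r_{1}\pm r_{2}$, write $r_{i}=r(a_{i})$ with $a_{i}\in E$ of norm one; the hypothesis $r_{1}\perp r_{2}$ is equivalent to $a_{1}\perp a_{2}$, and hence $\|a_{1}\pm a_{2}\|=1$. Using the local Gelfand model of the JB$^*$-subtriple generated by $a_{1}$ and $a_{2}$, odd triple powers and triple roots distribute orthogonally: $(a_{1}\pm a_{2})^{[2n-1]}=a_{1}^{[2n-1]}\pm a_{2}^{[2n-1]}$ and $(a_{1}\pm a_{2})^{[1/(2n-1)]}=a_{1}^{[1/(2n-1)]}\pm a_{2}^{[1/(2n-1)]}$; passing to the weak$^*$-limit defining the range tripotent produces $r(a_{1}\pm a_{2})=r_{1}\pm r_{2}$. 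Thus $r_{1}\pm r_{2}$ is itself a range tripotent, and \eqref{eq range tripotents in bidual} together with the brief Peirce computation that produced \eqref{eq T on compact and range tripotents is a triple derivation} delivers the second identity.

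For $v=e\pm r_{2}$ I approximate $r_{2}$ in the strong$^*$-topology by a sequence $(f_{n})$ of compact-$G_{\delta}$ tripotents with $f_{n}\leq r_{2}$, constructed inside $E_{a_{2}}$ by functional calculus in the very same spirit as in the proof of Proposition \ref{p loal triple derivations on compact tripotents}. Since $f_{n}\leq r_{2}$ places $f_{n}$ inside $E_{2}^{**}(r_{2})$ while $e\perp r_{2}$ places $e$ inside $E_{0}^{**}(r_{2})$, Peirce orthogonality gives $e\perp f_{n}$ for every $n$. By the first case each $e\pm f_{n}$ is then a compact tripotent satisfying the triple-derivation identity, and passing to strong$^*$-limits in $n$ -- using the joint strong$^*$-continuity of the triple product on bounded sets, the strong$^*$-continuity of $T^{**}$ already invoked in the proof of Proposition \ref{p loal triple derivations on compact tripotents}, and Lemma \ref{l Peirce zero strong* limits} -- transfers the identity to $e\pm r_{2}$. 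The principal obstacle is the orthogonal-sum closure property of compact tripotents used in the first case; once that is granted, the remaining two cases are routine combinations of orthogonality, functional calculus and strong$^*$-continuity arguments.
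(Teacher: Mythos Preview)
Your proof is correct and follows essentially the same route as the paper: show $e\pm u$ is compact, $r_1\pm r_2$ is a range tripotent, and handle $e\pm r_2$ by approximating $r_2$ from below by compact tripotents orthogonal to $e$ and passing to strong$^*$-limits. The only cosmetic difference is that the paper dispatches the first case by a direct citation of \cite[Proposition~3.7]{FerPe10b} (which covers $e\pm u$ at once), whereas you unwind it into ``$-u$ is compact'' plus closure under orthogonal sums; both are fine.
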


\begin{proof} Since $e$ and $u$ are two orthogonal compact tripotents in $E^{**},$ it follows from Proposition 3.7 in \cite{FerPe10b} that $e\pm u$ is a compact tripotent in $E^{**}.$ It is easy to see that the sum and the difference of two orthogonal range tripotents in $E^{**}$ is again a range tripotent in $E^{**}$. Thus, the first and the second identity have been proved in ($\ref{eq T on compact and range tripotents is a triple derivation}$).\smallskip

To see the last identity, we recall that since $e\perp r_2$ and $r_2$ is a range projection, we can find a sequence of compact tripotents $(e_n)$ in $E^{**}$ such that $e_n\leq r_2$, and hence $e_n\perp e$ for every $n$, and $(e_n)\to r_2$ in the strong$^*$-topology of $E^{**}.$ The first identity shows that $$T^{**} \J {e\pm e_{n}}{e\pm e_{n}}{e\pm e_{n}} = 2 \J {e\pm e_{n}}{e\pm e_{n}}{T^{**} (e\pm e_{n})}$$ $$+ \J {e\pm e_{n}}{T^{**}(e\pm e_{n})}{e\pm e_{n}},$$ for every $n$. The desired equality follows by taking strong$^*$-limit in $n$.
\end{proof}

Let $T : E \to E$ be a bounded local triple derivation on a JB$^*$-triple. Another application of the local Gelfand structure of JB$^*$-triples allows us to see that each element $a$ in a JB$^*$-triple $E$ can be approximated in norm by a finite real linear combination of mutually orthogonal range and compact tripotents in $E_a^{**}\subseteq E^{**}$. Corollary \ref{c bitranspose compact range tripotents} will imply that $T^{**}$ behaves as a triple derivation on those elements which are finite real linear combinations of mutually orthogonal range and compact tripotents in $E^{**},$ and consequently, $T$ (and hence $T^{**}$) is a triple derivation.


We can state now our main result, which solves a problem conjectured by M. Mackey in \cite[Conjecture (C3)]{Mack} (and also posed in \cite[Problem 1]{BurFerGarPe2012}).

\begin{theorem}\label{thm local triple derivations are triple derivations} Every bounded local triple derivation on a JB$^*$-triple is a triple derivation.
\end{theorem}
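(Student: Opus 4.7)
The plan is to reduce the full trilinear triple derivation identity to its ``diagonal'' form
$$T^{**}\J xxx = 2\,\J xx{T^{**}(x)} + \J x{T^{**}(x)}x, \qquad (\ast)$$
verify $(\ast)$ on a norm-dense subset of $E$, and then invoke standard polarisation. The structural input is already given by Proposition \ref{p loal triple derivations on compact tripotents}, identity \eqref{eq range tripotents in bidual}, and Corollary \ref{c bitranspose compact range tripotents}.

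First I would fix $a\in E$ with $\|a\|=1$ and use the local Gelfand representation $\Psi: E_a \to C_0(L)$, $\Psi(a)(t)=t$. Real-valued simple functions with pairwise disjoint compact (respectively compact-open) support are norm-dense in $C_{0,\mathbb{R}}(L)$, so $a$ is the norm limit of elements of the form $x=\sum_{k=1}^{n} \lambda_k e_k$ with $\lambda_k\in\mathbb{R}$ and $e_k\in E_a^{**}\subseteq E^{**}$ pairwise orthogonal tripotents, each either compact or (at most one of them, corresponding to the maximum of $L$) the range tripotent $r(a)$. Thus it suffices to prove $(\ast)$ on every such $x$ and then appeal to norm-continuity of $T$ and of the triple product.

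Second I would establish $(\ast)$ on such linear combinations in two stages. For the $2^n$ sign combinations $y=\sum_k \varepsilon_k e_k$, $\varepsilon_k\in\{-1,+1\}$, Proposition~3.7 of \cite{FerPe10b} and its analogue for range tripotents imply that $y$ is again a compact, respectively range, tripotent, so $(\ast)$ at $y$ is precisely \eqref{eq T on compact and range tripotents is a triple derivation}. To promote this to arbitrary real scalars I would observe that the map
$$\phi(x):=T^{**}\J xxx - 2\,\J xx{T^{**}(x)} - \J x{T^{**}(x)}x$$
is, when restricted to $\mathrm{span}_{\mathbb{R}}\{e_1,\dots,e_n\}$, a real homogeneous polynomial of degree three in the coordinates $(\lambda_1,\dots,\lambda_n)$; vanishing at the $2^n$ sign vertices $\sum \varepsilon_k e_k$ then imposes enough independent linear conditions to force every coefficient to vanish (the case $n=2$ reads off the four monomials $\lambda_1^3,\lambda_1^2\lambda_2,\lambda_1\lambda_2^2,\lambda_2^3$, and the general case follows by induction, splitting off one coordinate at a time). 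Hence $(\ast)$ holds at $x$, and by continuity at every $a\in E$.

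Third, once $(\ast)$ is known on all of $E$, the standard polarisation of $\phi$ (substituting $x$ by $a+tb+sc$ with real parameters $t,s$ and matching coefficients, together with the outer symmetry and conjugate-linearity of the triple product) recovers the full identity $T\J abc = \J{T(a)}bc + \J a{T(b)}c + \J ab{T(c)}$. The principal obstacle is the second step, namely the polynomial-interpolation argument that lifts Corollary \ref{c bitranspose compact range tripotents} from two-term sign combinations to arbitrary real linear combinations; here one must take care to treat the single range-tripotent summand of $E_a^{**}$ uniformly with the compact summands, using the stability of each class under orthogonal addition.
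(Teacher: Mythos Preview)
Your second step contains a genuine gap: the polynomial interpolation argument does not go through. A homogeneous real cubic $\phi(\lambda_1,\dots,\lambda_n)$ that vanishes at all $2^n$ sign vertices need not vanish identically. Already for $n=2$ the four conditions $\phi(\pm1,\pm1)=0$ are pairwise dependent (since $\phi(-x)=-\phi(x)$), so you obtain only two independent linear relations, namely $A+C=0$ and $B+D=0$ in your notation; the cubic $\lambda_1^3-\lambda_1\lambda_2^2$ is a concrete counterexample. The proposed induction ``splitting off one coordinate at a time'' therefore has no base case, and in fact fails at every stage for the same parity reason. A secondary but related issue is that Corollary~\ref{c bitranspose compact range tripotents} is stated only for sums of \emph{two} orthogonal compact/range tripotents; you have not justified that an $n$-fold sign combination of mixed compact and range tripotents is again of the required type.

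The paper's proof circumvents this by not relying on interpolation at all. It uses the structural consequence of Proposition~\ref{p loal triple derivations on compact tripotents}$(a)$, namely $P_0(e_j)T^{**}(e_j)=0$, to kill directly the ``fully off-diagonal'' terms $\J{e_i}{T^{**}(e_j)}{e_k}$ for $i,k\neq j$; then the pairwise identities from Corollary~\ref{c bitranspose compact range tripotents} applied to $e_i\pm e_j$ yield the mixed relation $\J{e_j}{e_j}{T^{**}(e_i)}+\J{e_i}{T^{**}(e_j)}{e_j}=0$. With these two facts in hand, a straightforward expansion of $T^{**}\J bbb$, $2\J{T^{**}(b)}bb$, and $\J b{T^{**}(b)}b$ for $b=\sum\lambda_i e_i$ verifies $(\ast)$ term by term. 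Your argument can be repaired by inserting exactly these two algebraic identities in place of the interpolation step.
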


\begin{proof} Let $T: E \to E$ be a bounded local triple derivation on a JB$^*$-triple $E$. Let $e_1 , \ldots, e_m$ be a family of mutually orthogonal range or compact tripotents in $E^{**}$. Let us pick $i,j,k\in \{1,\ldots, m\}$ with $i,k \neq j.$ By Proposition \ref{p loal triple derivations on compact tripotents} and $(\ref{eq range tripotents in bidual})$ we have $P_0 (e_j) T^{**} (e_j)=0.$ By assumptions, $e_i,e_k\in E^{**}_0 (e_j)$ and hence \begin{equation}\label{eq 1 theorem main} \J {e_i}{T^{**}(e_j)}{e_k} = 0,
\end{equation} by Peirce arithmetic.

Now, fix $i\neq j$ in $\{1,\ldots, m\}$. Since $e_i$ and $e_j$ are compact or range tripotents in $E^{**}$, Corollary \ref{c bitranspose compact range tripotents} implies that \begin{equation}\label{eq 1b theorem main} T^{**} \J {e_i }{e_i}{e_i} = 2 \J {e_i }{e_i }{T^{**} (e_i)} + \J {e_i }{T^{**}(e_i)}{e_i};
\end{equation} and $$ T^{**} \J {e_i \pm e_j}{e_i \pm e_j}{e_i \pm e_j} = 2 \J {e_i \pm e_j}{e_i \pm e_j}{T^{**} (e_i \pm e_j)}$$ $$ + \J {e_i \pm e_j}{T^{**}(e_i \pm e_j)}{e_i \pm e_j}.$$ Combining the last two identities we get $$\pm 2 \J {e_i}{e_i}{T^{**} (e_j)} + 2 \J {e_j}{e_j}{T^{**} (e_i)} \pm  \J {e_i }{T^{**}(e_j)}{e_i )} + \J {e_j}{T^{**}(e_i)}{ e_j} $$ $$\pm 2 \J {e_i}{T^{**}(e_i)}{ e_j} + 2 \J {e_i}{T^{**}( e_j)}{e_j} =0, $$ and consequently, $$+ 4 \J {e_j}{e_j}{T^{**} (e_i)} + 2 \J {e_j}{T^{**}(e_i)}{ e_j}+ 4 \J {e_i}{T^{**}( e_j)}{e_j} =0.$$ Applying ($\ref{eq 1 theorem main}$) we obtain \begin{equation}\label{eq 2 theorem main} \J {e_j}{e_j}{T^{**} (e_i)} + \J {e_i}{T^{**}( e_j)}{e_j} =0.
\end{equation}

Consider now an element $\displaystyle b=\sum_{i=1}^{m} \lambda_i e_i,$ where $e_1 , \ldots, e_m$ are as above. Having in mind that  $e_1 , \ldots, e_m$ are mutually orthogonal, we compute \begin{equation}\label{eq 3 theorem main} T^{**}(\J bbb) = \sum_{i=1}^{m} \lambda_i^{3} T^{**} (\J {e_i}{e_i}{e_i});
\end{equation}
\begin{equation}\label{eq 4 theorem main} 2 \J {T^{**}(b)}bb =2 \sum_{i,j=1}^{m} \lambda_i^2 \lambda_j \J {e_i}{e_i}{T^{**} (e_j)}
\end{equation} $$= 2\sum_{i=1}^{m} \lambda_i^3 \J {e_i}{e_i}{T^{**} (e_i)} + 2\sum_{i,j=1, i\neq j}^{m} \lambda_i^2 \lambda_j \J {e_i}{e_i}{T^{**} (e_j)};$$
\begin{equation}\label{eq 5 theorem main} \J b{T^{**}(b)}b = \hbox{ (by ($\ref{eq 1 theorem main}$)) } = \sum_{i=1}^{m} \lambda_i^3 \J {e_i}{T^{**} (e_i)}{e_i} +2 \sum_{i,j=1, i\neq j}^{m} \lambda_i^2 \lambda_j \J {e_i}{T^{**} (e_i)}{e_j}.
\end{equation} Finally, by $(\ref{eq 1b theorem main})$ and $(\ref{eq 2 theorem main})$ we have $$T^{**} \J bbb = 2 \J {T^{**}(b)}bb + \J b{T^{**}(b)}b.$$ Since every element $a\in E$ can be approximated in norm, by elements of the form $\displaystyle b=\sum_{i=1}^{m} \lambda_i e_i$, we conclude that $T\J aaa = 2 \J {T^{**}(a)}aa + \J a{T^{**}(a)}a$, for every $a\in E.$ Finally, an standard polarisation argument, via formula \begin{equation}
\label{polarization fla} \{ x,y,z\} = 8^{-1} \sum_{k=0}^{3} \sum_{j=1}^{2} (-1)^{j} i^{k} \Big(x+ i^{k} \ y + (-1)^{j} z\Big)^{[3]} \ \ \
(x,y, z\in {E}),
\end{equation} assures that $T$ is a triple derivation.
\end{proof}

In 1997, J.M. Isidro, W. Kaup and A. Rodríguez Palacios introduce a strictly wider class of Jordan triples over the real field and called the elements of this new category \emph{real JB$^*$-triples}. A \emph{real JB$^*$-triple} is a norm closed real subtriple of a JB$^*$-triple (cf. \cite{IsKaRo95}). When restricted to real scalar multiplication, every (complex) JB$^*$-triple is also a real JB$^*$-triple. The class of real JB$^*$-triples also includes real C$^*$-algebras, JB-algebras, JC-algebras, real JB$^*$-algebras, operator spaces between real, complex and quaternionic Hilbert spaces and real Hilbert spaces. Every real JB$^*$-triple can be complexified to become a JB$^*$-triple. Furthermore, every real JB$^*$-triple $A$ is a real form of a complex
JB$^*$-triple, that is, there exist a (complex) JB$^*$-triple $B\cong A\oplus i A$ and a period 2 conjugate linear isometry  $\tau: B\rightarrow B$ (which is also a JB$^*$-triple homomorphism) such that $A=\{b\in B\::\:\tau(b)=b\}$ (see \cite{IsKaRo95}). Let us consider $\widetilde{\tau} : B^{*} \rightarrow B^{*}$ defined by
$\widetilde{\tau} (\phi) (z) = \overline{\phi (\tau (z))}.$
The mapping $\widetilde{\tau}$ is a conjugation on $B^{*}$, and the restriction mapping $$(B^{*})^{\widetilde{\tau}}
\longrightarrow (B^{\tau})^{*}\ (\cong A^*)$$ $$\phi \mapsto \phi|_{A}$$ is an
isometric bijection, where $(B^{*})^{\widetilde{\tau}} := \{ \phi \in
B^{*} : \widetilde{\tau} (\phi)= \phi \}.$ The second transpose $\tau^{**} : B^{**}\to B^{**}$ is a period 2 conjugate linear isometry satisfying $\left(B^{**}\right)^{\tau^{**}} = A^{**}$. \smallskip

A real \emph{JBW$^*$-triple} is a real JB$^*$-triple which is also a dual Banach space. Every real JBW$^*$-triple has a unique (isometric) predual and separately weak$^*$ continuous triple product (see \cite{MarPe}). The bidual of every real JB$^*$-triple is a JBW$^*$-triple (compare \cite{IsKaRo95}).\smallskip

The notions of range, support or compact-$G_{\delta}$, open, closed and compact tripotents also make sense in the bidual of every real JB$^*$-triple. When real JB$^*$-triples are regarded as real forms of complex JB$^*$-triples, the generalised Urysohn's lemmas proved by the second and third author of this note in \cite{FerPe10b} remain valid for real JB$^*$-triples. Furthermore, an appropriate local Gelfand theory for single-generated real JB$^*$-subtriples is also available in the real setting (cf. \cite[\S 3]{BurPeRaRu}). Thus, the arguments given above to prove Theorem \ref{thm local triple derivations are triple derivations} can be applied to show that every bounded local triple derivation $T$ on a real JB$^*$-triple $A$ satisfies that $T\J aaa = 2\J {T(a)}aa + \J a{T(a)}a$, for every $a\in A.$ Unfortunately, the polarisation formula \eqref{polarization fla} employed at the end of the proof of Theorem \ref{thm local triple derivations are triple derivations} is not valid for real JB$^*$-triples, so we  cannot  obtain the conclusion of that theorem in the real setting.

\begin{corollary}\label{c local derivations on real JB$^*$-triples} Let $T$ be a continuous local triple derivation on a real JB$^*$-triple $A$. Then $T\J aaa = 2\J {T(a)}aa + \J a{T(a)}a$, for every $a\in A,$ that is, $T$ is a triple derivation of the symmetrized Jordan triple product $3 <\!a,b,c\!>:= \J abc + \J cab + \J bca.$  $\hfill\Box$
\end{corollary}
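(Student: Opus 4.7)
My plan is to mirror the proof of Theorem \ref{thm local triple derivations are triple derivations} in the real setting, and then address the final polarisation step separately.

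First, I would realise $A$ as a real form of a complex JB$^*$-triple $B = A\oplus iA$ via a period-2 conjugate linear triple homomorphism $\tau: B\to B$ with $A = B^{\tau}$. The bidual $A^{**}$ identifies with $(B^{**})^{\tau^{**}}$, so range, support, and compact tripotents of $A$ live inside $A^{**} \subseteq B^{**}$, and the real-JB$^*$-triple versions of the generalised Urysohn lemmas from \cite{FerPe10b} and the local Gelfand theory from \cite[\S 3]{BurPeRaRu} supply the same approximation technology used in Section 2. In particular, Lemma \ref{l 4 BurFerGarPe}, the strong$^*$-approximation of compact tripotents by elements of $A_a$, and the strong$^*$-joint continuity of the triple product carry over verbatim.

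Next, I would redo the proof of Proposition \ref{p loal triple derivations on compact tripotents} and of Corollary \ref{c bitranspose compact range tripotents} in the real setting: construct the sequences $(a_n),(b_n)$ in $A_a$, apply Lemma \ref{l 4 BurFerGarPe}, and pass to strong$^*$-limits using the real version of Lemma \ref{l Peirce zero strong* limits}. This yields, for every compact or range tripotent $e\in A^{**}$, the identities $P_0(e)T^{**}(e)=0$ and $P_2(e)T^{**}(e) = -Q(e)T^{**}(e)$, and hence the cubic identity $T^{**}\{e,e,e\} = 2\{e,e,T^{**}(e)\} + \{e,T^{**}(e),e\}$ on sums and differences of mutually orthogonal compact/range tripotents. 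Finally, repeating the cross-term bookkeeping of Theorem \ref{thm local triple derivations are triple derivations} for a real linear combination $b=\sum_{i=1}^m \lambda_i e_i$ of mutually orthogonal compact or range tripotents in $A^{**}$, and then approximating an arbitrary $a\in A$ by such combinations via the real local Gelfand theory, gives
\begin{equation*}
T\{a,a,a\} = 2\{T(a),a,a\} + \{a,T(a),a\}, \quad \forall a\in A.
\end{equation*}

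The remaining (and only genuinely new) step is to upgrade this cubic identity to the triple-derivation identity for the symmetrised product $\langle\cdot,\cdot,\cdot\rangle$. Observing that the outer symmetry of $\{\cdot,\cdot,\cdot\}$ yields $2\{T(a),a,a\} + \{a,T(a),a\} = 3\langle T(a),a,a\rangle$ and $\{a,a,a\}=3\langle a,a,a\rangle$, the cubic identity reads $T(\langle a,a,a\rangle) = \langle T(a),a,a\rangle$. Since $\langle\cdot,\cdot,\cdot\rangle$ is a fully symmetric trilinear form over $\mathbb{R}$, I would invoke the real polarisation identity
\begin{equation*}
6\langle a,b,c\rangle = P(a{+}b{+}c) - P(a{+}b) - P(a{+}c) - P(b{+}c) + P(a) + P(b) + P(c),
\end{equation*}
where $P(x) := \langle x,x,x\rangle$, and apply $T$ to both sides, expanding each $T(P(x+y+z))$ via the cubic identity and then collecting terms by trilinearity. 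This produces
\begin{equation*}
T\langle a,b,c\rangle = \langle T(a),b,c\rangle + \langle a,T(b),c\rangle + \langle a,b,T(c)\rangle,
\end{equation*}
which is the desired conclusion. The main obstacle is verifying that every strong$^*$-continuity and tripotent-approximation tool used in Section 2 has a faithful real-form analogue; once that bookkeeping is done, the polarisation step is a purely algebraic manipulation and no complex scalars are needed.
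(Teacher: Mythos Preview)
Your approach is essentially the paper's: the corollary is stated with a $\Box$, and the preceding paragraph simply asserts that the arguments of Theorem \ref{thm local triple derivations are triple derivations} carry over to the real setting via the real-form machinery, the generalised Urysohn lemmas of \cite{FerPe10b}, and the real local Gelfand theory of \cite{BurPeRaRu}, yielding the cubic identity; the passage from the cubic identity to the derivation law for the fully symmetric product $\langle\cdot,\cdot,\cdot\rangle$ is left implicit. One arithmetic slip to fix in your last step: since $3\langle a,a,a\rangle = 3\{a,a,a\}$ one has $\{a,a,a\}=\langle a,a,a\rangle$ (not $3\langle a,a,a\rangle$), so the cubic identity actually reads $T\langle a,a,a\rangle = 3\langle T(a),a,a\rangle = \langle T(a),a,a\rangle+\langle a,T(a),a\rangle+\langle a,a,T(a)\rangle$, after which your real polarisation formula gives the claimed derivation identity for $\langle\cdot,\cdot,\cdot\rangle$.
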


In the light of the above corollary, it seems natural to consider the following problem:

\begin{problem}\label{problem local triple derivations on real JB*-triples} Is every bounded local triple derivation on a real JB$^*$-triple a triple derivation?
\end{problem}

In a recent paper (see \cite{PeRu}), B. Russo and the third author of this note initiated the study of triple module-valued triple derivations on (real and complex) JB$^*$-triples. Let $E$ be a complex (resp. real) Jordan triple. We recall that a \emph{Jordan triple $E$-module}  (also called \emph{triple $E$-module}) is a
vector space $X$ equipped with three mappings $$\{.,.,.\}_1 :
X\times E\times E \to X, \quad \{.,.,.\}_2 : E\times X\times E \to
X$$ $$ \hbox{ and } \{.,.,.\}_3: E\times E\times X \to X$$
satisfying  the following axioms:
\begin{enumerate}[{$(JTM1)$}]
\item $\{ x,a,b \}_1$ is linear in $a$ and $x$ and conjugate
linear in $b$ (resp., trilinear), $\{ a,b,x \}_3$ is linear in $b$
and $x$ and conjugate linear in $a$ (resp., trilinear) and
$\{a,x,b\}_2$ is conjugate linear in $a,b,x$ (resp., trilinear)
\item  $\{ x,b,a \}_1 = \{ a,b,x \}_3$, and $\{ a,x,b \}_2  = \{
b,x,a \}_2$  for every $a,b\in E$ and $x\in X$. \item Denoting by
$\J ...$ any of the products $\{ .,.,. \}_1$, $\{ .,.,. \}_2$ and
$\{ .,.,. \}_3$, the Jordan identity \eqref{Jordan  identity}
holds whenever one of the elements is in $X$ and the rest are in $E$.
\end{enumerate} When the products $\{ .,.,.
\}_1$, $\{ .,.,. \}_2$ and $\{ .,.,. \}_3$ are (jointly)
continuous we shall say that $X$ is a \emph{Banach (Jordan) triple
$E$-module}. Henceforth, the triple products $\J \cdot\cdot\cdot_j$, $ j=1,2,3$, which occur in the definition of Jordan triple module will be denoted simply by $\J \cdot\cdot\cdot$ whenever the
meaning is clear from the context.\smallskip

It is obvious that every real or complex Jordan triple $E$ is a {\it real} triple $E$-module. The dual space,
$E^*$, of a complex (resp., real) Jordan Banach triple $E$ is a complex (resp., real) triple $E$-module with respect to the products: $$  \J ab{\varphi} (x) = \J {\varphi}ba (x) := \varphi \J bax, \hbox{ and, } \J a{\varphi}b (x) := \overline{ \varphi \J axb },$$ for all $\varphi\in E^*$, $a,b,x\in E$.\smallskip

Let $E$ be a complex (respectively, real) JB$^*$-triple and let $X$ be a
triple $E$-module. We recall that a conjugate linear (respectively, linear)
mapping $\delta : E \to X$ is said to be a \emph{triple or ternary derivation} if
$$\delta \{a,b,c\} = \J {\delta(a)}bc +\J a{\delta(b)}c + \J ab{\delta(c)}.$$ A conjugate linear (respectively, linear) mapping $T: E\to X$ will be called a \emph{local triple derivation} if for each $a$ in $E$ there exists a triple derivation $\delta_{a}: E \to X$ such that $T(a)= \delta_a (a).$

\begin{problem}\label{p local triple derivations onto triple modules} Is every continuous local triple derivation from a real or complex JB$^*$-triple $E$ into a Banach triple $E$-module a triple derivation?
\end{problem}

\subsection{Automatic continuity}

We establish now an automatic continuity result for local triple derivations, giving a positive answer to Problem \eqref{problem local triple derivations continuous}. We shall review the arguments given by T. Barton and Y. Friedman to show that a triple derivation on a JB$^*$-triple is a triple derivation. Let $X$ be a complex Banach space. We recall that a linear mapping $T: X\to X$ is \emph{dissipative} if for each $x\in X$ and each functional $\phi\in X^*$ with $\|x\|= \|\phi\| = \phi (x) =1$ we have $\Re\hbox{e} \phi (T(x)) \leq 0$. It is known that $T$ is continuous whenever it is dissipative (compare \cite[Proposition 3.1.15]{BraRo}). In \cite[Theorem 2.1]{BarFri}, Barton and Friedman prove that every derivation on a JB$^*$-triple $E$ is dissipative and hence continuous. Let us review some aspect in their proof. Let $x$ be an element in $E$, let $\phi$ a functional in $E^*$ with $\|x\|= \|\phi\| = \phi (x) =1$ and let $\delta : E \to E$ a triple derivation. Applying Peirce arithmetic and support tripotents, the arguments in the proof of Theorem 2.1 in \cite{BarFri} show that $\phi \delta (\J xxx - x) = 0$ and hence $\Re\hbox{e} \phi (\delta(x)) = 0$. It is also justified in the same result that $\phi \J axx = \phi (a)$ and $\phi \J xax = \overline{\phi (a)}$, for every $a$ in $E$. Let $T: E \to E$ be a local triple derivation, not assumed a priori to be continuous, on $E$. Pick a triple derivation $\delta_x$ satisfying $\delta_x (x) = T(x)$. In this case, we have $$2 \phi T(x) + \overline{\phi T(x)} = 2\phi \J {T(x)}xx  +  \phi \J x{T(x)}x $$
$$ = 2\phi \J {\delta_x(x)}xx  +  \phi \J x{\delta_x(x)}x = \phi \delta_x \J xxx .$$ It follows from the above that $$2 \Re\hbox{e} \phi T(x) = \phi \delta_x \J xxx - \phi T(x) =  \phi \delta_x \J xxx - \phi \delta_x (x) =  \phi \delta_x (\J xxx - x) =0,$$ because $\delta_x$ is a triple derivation on $E$. This shows that $T$ is dissipative.

\begin{theorem}\label{t automatic continuity} Every local triple derivation on a (real or complex) JB$^*$-triple is continuous. Consequently, every local triple derivation on a JB$^*$-triple is a triple derivation.$\hfill\Box$
\end{theorem}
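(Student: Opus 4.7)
My plan is to reduce the theorem to a dissipativity statement and then invoke the Bratteli--Robinson automatic boundedness theorem \cite[Proposition 3.1.15]{BraRo}; the ``consequently'' part will follow by plugging continuity into Theorem \ref{thm local triple derivations are triple derivations}. Recall that a linear map $T: E\to E$ is dissipative if $\mathrm{Re}\,\phi(T(x))\le 0$ for every pair $(x,\phi)\in E\times E^*$ with $\|x\|=\|\phi\|=\phi(x)=1$. Essentially the entire computation is already laid out in the paragraph preceding the theorem; the task is to assemble it cleanly.

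In the complex case I would fix such $x$ and $\phi$ and, by the defining property of a local triple derivation, select a triple derivation $\delta_x: E\to E$ with $\delta_x(x)=T(x)$. The two Barton--Friedman identities
\[ \phi\J axx=\phi(a), \qquad \phi\J xax=\overline{\phi(a)} \qquad (a\in E), \]
together with the derivation rule $\delta_x\J xxx = 2\J{T(x)}xx + \J x{T(x)}x$, yield
\[ 2\phi(T(x)) + \overline{\phi(T(x))} \ = \ \phi\,\delta_x\J xxx. \]
Applying the further Barton--Friedman vanishing $\phi\,\sigma(\J xxx - x)=0$, which holds for every triple derivation $\sigma$ on $E$, to the choice $\sigma=\delta_x$ gives $\phi\,\delta_x\J xxx = \phi\,\delta_x(x) = \phi(T(x))$. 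Substituting back produces $\phi(T(x)) + \overline{\phi(T(x))}=0$, i.e. $\mathrm{Re}\,\phi(T(x))=0$, so $T$ is dissipative and hence bounded.

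The real case runs on the same template: the Barton--Friedman identities and the vanishing $\phi\,\sigma(\J xxx-x)=0$ rest on support tripotents, Peirce decomposition, and the local Gelfand theory of the single-generated subtriple $E_x$, each of which is available in the real setting by \cite{FerPe10b} and \cite[\S 3]{BurPeRaRu} (this is the same structural input used in the proof of Corollary \ref{c local derivations on real JB$^*$-triples}). The main obstacle I anticipate is the bookkeeping needed to verify that the Barton--Friedman manipulations transfer verbatim when the triple product is only $\mathbb{R}$-trilinear and functionals are only $\mathbb{R}$-linear; once that is checked, the scalar arithmetic is even simpler since no conjugation appears, and one obtains $\phi(T(x))=0$, a fortiori dissipativity. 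Combining dissipativity with \cite[Proposition 3.1.15]{BraRo} yields continuity in both cases, and feeding that continuity into Theorem \ref{thm local triple derivations are triple derivations} closes the ``consequently'' assertion in the complex case; in the real case one can only conclude, via Corollary \ref{c local derivations on real JB$^*$-triples}, that $T$ is a derivation of the symmetrised triple product, because the polarisation formula \eqref{polarization fla} is unavailable over $\mathbb{R}$, consistent with Problem \ref{problem local triple derivations on real JB*-triples} remaining open.
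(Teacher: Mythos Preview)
Your proposal is correct and follows essentially the same route as the paper: you invoke the Barton--Friedman identities $\phi\J axx=\phi(a)$, $\phi\J xax=\overline{\phi(a)}$ and the vanishing $\phi\,\delta_x(\J xxx - x)=0$, combine them with $\delta_x(x)=T(x)$ to obtain $\mathrm{Re}\,\phi(T(x))=0$, and then apply \cite[Proposition~3.1.15]{BraRo}; this is exactly the computation displayed in the paragraph preceding the theorem. Your remarks on the real case (and the caveat that only the symmetrised-product conclusion survives there) are also in line with the paper's treatment, which states the continuity for real and complex triples but reserves the ``consequently'' clause for the complex case.
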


\section{Local triple derivations and generalised derivations on C$^*$-algebras}

We begin this section with the following corollary which solves Problem 2 in \cite{BurFerGarPe2012}.

\begin{theorem}\label{thm local triple derivations on C*-algebras are triple derivations} Every local triple derivation on a C$^*$-algebra is a triple derivation. $\hfill\Box$
\end{theorem}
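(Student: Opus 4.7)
The plan is to view the statement as an immediate specialization of the JB$^*$-triple results already established in Section 2. Recall that every C$^*$-algebra $B$ carries the canonical triple product
$$\{x,y,z\} = \tfrac{1}{2}(xy^*z + zy^*x),$$
as recorded in equation \eqref{eq C*-triple product}, and with this product $B$ is a JB$^*$-triple. A triple derivation on the C$^*$-algebra $B$ is, by definition, a linear map $\delta : B \to B$ satisfying the ternary Leibniz rule with respect to this product, so the notions of (local) triple derivation on $B$ qua C$^*$-algebra and on $B$ qua JB$^*$-triple coincide verbatim.

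Therefore, given an arbitrary linear map $T : B \to B$ that is a local triple derivation on the C$^*$-algebra $B$ (with no continuity assumption), I simply regard $T$ as a local triple derivation on the JB$^*$-triple $(B, \{\cdot,\cdot,\cdot\})$. Theorem \ref{t automatic continuity} then applies and yields two conclusions in one stroke: first, $T$ is automatically continuous; second, $T$ is a triple derivation on the JB$^*$-triple $B$, which, by the identification of the triple product, is exactly a triple derivation on the C$^*$-algebra $B$. This settles Problem \ref{problem local triple derivations on Cstar} in full generality.

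There is no genuine obstacle here: the entire difficulty has already been absorbed by Theorem \ref{thm local triple derivations are triple derivations} (which handled the bounded case via the compact/range tripotent analysis of $T^{**}$ in $B^{**}$) and Theorem \ref{t automatic continuity} (the adaptation of the Barton--Friedman dissipativity argument). The only thing to note, if one wishes to write a self-contained proof rather than invoke Theorem \ref{t automatic continuity} directly, is that the bidual $B^{**}$ of a C$^*$-algebra $B$ is a von Neumann algebra and hence a JBW$^*$-triple with an abundance of compact tripotents (namely the compact projections of Akemann--Pedersen, together with compact partial isometries obtained from polar decompositions of norm-one elements), which is exactly the structure exploited in Proposition \ref{p loal triple derivations on compact tripotents} and Corollary \ref{c bitranspose compact range tripotents}. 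Thus the entire machinery passes through, and the statement follows with essentially no further computation.
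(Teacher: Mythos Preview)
Your proposal is correct and matches the paper's approach exactly: the paper states this theorem with no proof (just a $\Box$), treating it as an immediate corollary of the JB$^*$-triple results from Section~2, precisely because every C$^*$-algebra is a JB$^*$-triple under the product~\eqref{eq C*-triple product} and Theorem~\ref{t automatic continuity} applies directly.
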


In \cite{BurFerGarPe2012}, it is established that every local triple derivation on a unital C$^*$-algebra is a triple derivation. The strategy to prove this result relies on the connections between (local) triple derivations and generalised derivations on unital C$^*$-algebras in the sense introduced by J. Li and Z. Pan in \cite{LiPan}. We recall that a linear mapping $D$ from a unital C$^*$-algebra $B$ to a (unital) Banach $B$-bimodule $X$ is called a \emph{generalised derivation} whenever the identity $$D (ab) = D(a) b + a D(b) - a D(1) b$$ holds for every $a,b$ in $B$. A \emph{generalised Jordan derivation} from $B$ to $X$ is a linear mapping $D: B \to X$ satisfying $D (a\circ b) = D(a)\circ b + a\circ D(b) - U_{a,b} D(1)$, for every $a,b$ in $B$, where the Jordan product is given by $a\circ b := \frac12 (a b + ba)$ and $U_{a,b} (x) := \frac12 ( axb + bxa) $ (cf. \cite{BurFerGarPe2012}). Every generalised (Jordan) derivation $D : B\to X$  with $D(1) =0$ is a (Jordan) derivation. As remarked in \cite[Remark 8]{BurFerGarPe2012}, generalised Jordan derivations from $B$ to $X$ and a generalised derivations from $B$ to $X$ coincide. A linear mapping $T : B \to X$ is a \emph{local generalised (Jordan) derivation} if for each $a\in B$, there exists a generalised (Jordan) derivation $D_a : B \to X$ with $T(a) = D_a (a)$.\smallskip

In the setting of unital C$^*$-algebras, J. Alaminos, M. Bresar, J. Extremera, and A. Villena \cite[Corollary 3.2]{AlBreExVill} and J. Li and Z. Pan \cite[Corollary 2.9]{LiPan} established the following interesting result:

\begin{theorem}\label{t LiPan}{\rm (\cite[Corollary 3.2]{AlBreExVill}, \cite[Corollary 2.9]{LiPan})} Suppose that $B$ is a unital C$^*$-algebra and $M$ is a unital Banach $B$-bimodule. Then for any
bounded linear map $T$ from $B$ to $M$, the following are equivalent:\begin{enumerate}[$(i)$] \item $T$ is a generalised (Jordan) derivation from $B$ to $M$;
\item $T$ is a local generalised (Jordan) derivation from $B$ to $M$;
\item $aT(b)c = 0$, whenever $a, b, c \in  B$ with $ab = bc = 0$.$\hfill\Box$
\end{enumerate}
\end{theorem}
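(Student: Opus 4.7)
The plan is to traverse $(i)\Rightarrow(ii)\Rightarrow(iii)\Rightarrow(i)$.

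The implication $(i)\Rightarrow(ii)$ is immediate by taking $D_a:=T$ for every $a$. For $(i)\Rightarrow(iii)$, suppose $T$ is a generalised derivation and let $a,b,c$ satisfy $ab=bc=0$. Evaluate $T(abc)=T(0)=0$ as $T((ab)c)$ and apply the generalised derivation identity to obtain $T(abc)=T(ab)c+abT(c)-abT(1)c=T(ab)c$; expanding $T(ab)$ in the same way and invoking $bc=0$ collapses the right-hand side to $aT(b)c$. Hence $aT(b)c=0$. For $(ii)\Rightarrow(iii)$, apply the same computation to the generalised derivation $D_b$ supplied by locality at $b$, giving $aT(b)c=aD_b(b)c=0$.

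The substantive step is $(iii)\Rightarrow(i)$. The first move is to absorb the anomalous value $T(1)$: set $S(a):=T(a)-aT(1)$, so that $S$ is bounded linear, $S(1)=0$, $S$ inherits $(iii)$ (since $ab=bc=0$ forces $abc=0$ and hence $abT(1)c=0$), and $T$ is a generalised derivation exactly when $S$ is a derivation. The task becomes: prove that a bounded linear map $S\colon B\to M$ with $S(1)=0$ and $aS(b)c=0$ whenever $ab=bc=0$ must satisfy $S(ab)=S(a)b+aS(b)$ for all $a,b$. I would introduce the bilinear defect $\Phi(a,b):=S(ab)-S(a)b-aS(b)$ and aim at $\Phi\equiv 0$. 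First verify the identity on pairs drawn from the C$^*$-subalgebra generated by a single self-adjoint element, using continuous functional calculus to approximate such elements in norm by finite real linear combinations of mutually orthogonal spectral projections, and then exploit $(iii)$ on those orthogonal projections to force each cross-term to vanish. A standard polarisation then promotes the identity from self-adjoint pairs to arbitrary ones, while Sinclair's theorem, which asserts that every bounded Jordan derivation of a C$^*$-algebra is automatically a derivation, provides the bridge between the Jordan and associative formulations that naturally surface along the way.

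The main obstacle is precisely the step of extracting $\Phi\equiv 0$ from the sparse information in $(iii)$: the hypothesis only constrains $T$ through three-factor products that vanish in a prescribed pattern, whereas the conclusion must control $S(ab)$ for every pair $(a,b)$. This is the content of the zero-product-determined algebra machinery of Alaminos--Bre\v{s}ar--Extremera--Villena (see \cite{AlBreExVill}), which uses the abundant supply of mutually orthogonal self-adjoint elements in a C$^*$-algebra to show that a continuous bilinear map annihilating the prescribed sparse configuration is already determined on a norm-dense subset of $B\times B$, whence $\Phi\equiv 0$ by continuity.
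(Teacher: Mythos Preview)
The paper does not prove this theorem at all: it is quoted verbatim from \cite{AlBreExVill} and \cite{LiPan} and marked with a closing box. So there is no ``paper's own proof'' to compare against; you are supplying what the paper outsources.

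Your treatment of the easy directions is fine. The computations for $(i)\Rightarrow(iii)$ and $(ii)\Rightarrow(iii)$ are correct, and the reduction $S(a):=T(a)-aT(1)$ is the right normalisation for attacking $(iii)\Rightarrow(i)$.

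There is, however, a genuine gap in your sketch of $(iii)\Rightarrow(i)$. You propose to work inside the C$^*$-subalgebra generated by a single self-adjoint element and ``approximate such elements in norm by finite real linear combinations of mutually orthogonal spectral projections''. In a general unital C$^*$-algebra this is impossible: the subalgebra generated by a self-adjoint $h$ is $C(\sigma(h))$, which contains no nontrivial projections when $\sigma(h)$ is connected. Spectral projections live in $B^{**}$, not in $B$, so an argument based on them does not produce the derivation identity on $B\times B$ without a further passage to the bidual and back that you have not supplied. The approach you describe only works as stated in C$^*$-algebras of real rank zero.

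You are right that the actual content is the zero-product machinery of \cite{AlBreExVill}, and once you invoke that result the implication follows. But the spectral-projection paragraph should be dropped or replaced: it is not a valid warm-up for the general case, and it is not how \cite{AlBreExVill} proceeds either (their argument exploits orthogonality of continuous functions with disjoint supports and a bilinear factorisation, not projections).
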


Unfortunately, the above definitions of generalised (Jordan) derivations and local generalised (Jordan) derivation make sense only when the domain is a unital C$^*$-algebra. However, statement $(iii)$ in the above theorem makes sense in the non-unital setting too. Our next goal is to generalise the above theorem to the setting of not necessarily unital C$^*$-algebras. In a first step we should consider a consequence derived from Theorem 4.5 in \cite{AlBreExVill09}. First we recall a definition taken from \cite[\S 4]{AlBreExVill09}: a generalised derivation from a Banach algebra $A$ to a Banach $A$-bimodule $X$ is a linear operator $D: A \to X$ for which there exists $\xi\in  X^{**}$ satisfying $$D(ab) = D(a)  b + a  D(b) - a \xi b \hbox{ ($a, b \in A$).}$$

\begin{theorem}\label{t AlBreExVill}{\cite[Theorem 4.5]{AlBreExVill09}} Suppose that $A$ is a C$^*$-algebra and $X$ is an essential Banach $A$-bimodule. Let $T: A \to X$ be a continuous linear operator
satisfying $$a,b,c \in A, ab = bc = 0 \Rightarrow aT(b)c = 0.$$ Then $T$ is a generalized derivation.
\end{theorem}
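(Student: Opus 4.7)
The plan is to reduce to a unital setting by passing to the bidual and then produce the element $\xi \in X^{**}$ witnessing the generalised derivation identity. Since $T$ is norm continuous, its bitranspose $T^{**} : A^{**} \to X^{**}$ is weak$^*$-weak$^*$ continuous. The bidual $A^{**}$ is a von Neumann algebra with unit $\mathbf{1}$, and the essential bimodule hypothesis on $X$ guarantees that the $A$-actions extend (via Arens-type products) to make $X^{**}$ a dual Banach $A^{**}$-bimodule with $\mathbf{1}\cdot x = x\cdot \mathbf{1}=x$ for all $x\in X\subseteq X^{**}$. The natural candidate is
\[
\xi := T^{**}(\mathbf{1})\in X^{**}.
\]

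With $\xi$ in hand, consider the continuous bilinear deficit
\[
\Phi : A\times A \to X^{**}, \qquad \Phi(a,b) := T(ab) - T(a)\,b - a\,T(b) + a\,\xi\, b.
\]
The task becomes showing $\Phi\equiv 0$. My first step would be to verify that $\Phi$ vanishes on orthogonal pairs, i.e.\ that $ab=0$ implies $\Phi(a,b)=0$. The hypothesis supplies information only about \emph{sandwiched} triple products $cT(b)d$ with $cb=bd=0$, so I would feed this into the problem by testing $\Phi(a,b)$ against approximate-identity factorisations: write $a = \lim a e_\lambda$ and use that $ab=0$ allows one to replace $b$ by $(\mathbf{1}-p)b(\mathbf{1}-q)$ (in the bidual) for suitable support projections, and then exploit the trilinear vanishing together with the essential structure of $X$ to pull the outer factors across.

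Once $\Phi$ is shown to vanish on zero products, I would invoke the machinery of \emph{zero product determined} C$^*$-algebras developed by Alaminos, Bre\v{s}ar, Extremera and Villena: every continuous bilinear map $A\times A\to Y$ that vanishes on orthogonal pairs factors as $(a,b)\mapsto \psi(ab)$ for some continuous linear $\psi : A\to Y$. Applied to $\Phi$, this yields $\Phi(a,b) = \psi(ab)$. Evaluating the identity against a bounded approximate identity $(e_\lambda)$ on the right, and taking weak$^*$ limits (where $T(e_\lambda)\to \xi$ weak$^*$ along a subnet by the definition of $\xi = T^{**}(\mathbf{1})$), one expects a telescoping of terms that forces $\psi\equiv 0$, giving the generalised derivation identity
\[
T(ab) = T(a)b + aT(b) - a\,\xi\, b \qquad (a,b\in A).
\]

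The main obstacle will be the zero-product step for $\Phi$. The hypothesis is genuinely trilinear and controls only the product $aT(b)c$ flanked by two annihilating factors, so extracting a purely bilinear orthogonality statement for $\Phi$ requires a careful interplay between approximate identities in $A$, the bidual support projections of $a$ and $b$, and the essentiality of $X$. A secondary technical issue is the Arens product: since $\xi$ lives only in $X^{**}$, the expression $a\xi b$ must be interpreted via the extended $A^{**}$-action on $X^{**}$, and one must verify that this extension is compatible enough with the weak$^*$ approximations used above for the limiting arguments to go through.
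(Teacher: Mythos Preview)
This theorem is not proved in the paper under review; it is quoted verbatim from \cite[Theorem~4.5]{AlBreExVill09} and invoked as a black box (it supplies the equivalence $(f)\Leftrightarrow(g)$ in the proof of Proposition~\ref{p multipliers 1}). There is therefore no in-paper argument to compare your proposal against.

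As for the proposal on its own merits: you correctly identify the natural candidate $\xi = T^{**}(\mathbf{1})$ and correctly point to the zero-product-determined machinery of Alaminos--Bre\v{s}ar--Extremera--Villena as the engine. But you also correctly flag the gap yourself and then leave it open: the hypothesis is genuinely \emph{trilinear} (it controls $aT(b)c$ only when $b$ is annihilated on both sides), whereas the factorisation lemma you want to invoke needs the \emph{bilinear} vanishing ``$ab=0 \Rightarrow \Phi(a,b)=0$''. Your paragraph about support projections, approximate identities and essentiality is a list of ingredients, not an argument; in particular, when $ab=0$ one has $\Phi(a,b) = -T(a)b - aT(b) + a\,\xi\, b$, and nothing in the hypothesis speaks directly to the term $T(a)b$. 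Until that reduction is actually carried out, the proposal remains a plan rather than a proof, and the substantive content is deferred to exactly the source the paper cites.
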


Given a C$^*$-algebra, $A$, the
\emph{multiplier algebra} of $A$, $M(A)$, is the set of all
elements $x\in A^{**}$ such that, for each element $a\in A$, $x
a$ and $a x$ both lie in $A$. We notice that $M(A)$ is a C$^*$-algebra
and contains the unit element of $A^{**}$. Clearly, $A= M(A)$ whenever $A$ is unital.

For later purposes, we recall some basic results on Arens regularity (cf. \cite{Arens51}). Let $X$, $Y$ and $Z$ be Banach spaces and let $m : X\times Y \to Z$ be a bounded bilinear mapping. Defining $m^* (z^\prime,x) (y) := z^\prime (m(x,y))$ $(x\in X, y\in Y, z^\prime\in Z^*)$, we obtain a bounded bilinear mapping $m^*: Z^*\times X \to Y^*.$ Iterating the process, we define a mapping $m^{***}: X^{**}\times Y^{**} \to Z^{**}.$ The mapping $x^{\prime\prime}\mapsto  m^{***}(x^{\prime\prime} , y^{\prime\prime})$ is weak$^*$ to weak$^*$ continuous whenever we fix  $y^{\prime\prime} \in  Y^{**}$, and the mapping $y^{\prime\prime}\mapsto  m^{***}(x, y^{\prime\prime})$ is weak$^*$ to weak$^*$ continuous for every $x\in  X$. One can consider the transposed mapping $m^{t} : Y\times X\to Z,$ $m^{t} (y,x) = m(x,y)$ and the extended mapping $m^{t***t}: X^{**}\times Y^{**} \to Z^{**}.$ In this case, the mapping $x^{\prime\prime}\mapsto  m^{t***t}(x^{\prime\prime} , y)$ is weak$^*$ to weak$^*$ continuous whenever we fix  $y \in  Y$, and the mapping $y^{\prime\prime}\mapsto  m^{t***t}(x^{\prime\prime}, y^{\prime\prime})$ is weak$^*$ to weak$^*$ continuous for every $x^{\prime\prime}\in  X^{**}$.

In general, the mappings $m^{t***t}$ and $m^{***}$ do not coincide (cf. \cite{Arens51}). When $m^{t***t}=m^{***},$
we say that $m$ is Arens regular. It is well known that the product of every C$^*$-algebra is Arens regular and the unique Arens extension of the product of $A$ to $A^{**}\times A^{**}$ coincides with the product of its enveloping von Neumann algebra (cf. \cite[Corollary 3.2.37]{Dales00}).

Let $X$ be a Banach $A$-bimodule over a C$^*$-algebra $A.$ Let us denote by $\pi_1: A\times X \to X$ and $\pi_2: X\times A \to X$ the corresponding module operations given by $\pi_1(a,x) = a x$ and $\pi_2(x,a) = x a$, respectively. By an abuse of notation, given $a\in A^{**}$ and $z\in X^{**},$ we shall frequently  write $a z = \pi_1^{***} (a,z)$ and $z a = \pi_2^{***} (z,a)$. It is known that $X^{**}$ is a Banach $A^{**}$-bimodule for the just defined operations (\cite[Theorem 2.6.15$(iii)$]{Dales00}). It is also known that whenever $(a_\lambda)$ and $(x_\mu)$ are nets in $A$ and $X$, respectively, such that $a_\lambda \to a\in A^{**}$ in the weak$^*$ topology of $A^{**}$ and $x_\mu\to x\in X^{**}$ in the weak$^*$ topology of $X^{**}$, then \begin{equation}\label{eq product bidual module} a x = \pi_1^{***}(a,x) = \lim_{\lambda} \lim_{\mu} a_\lambda x_\mu \hbox{ and } x a= \pi_2^{***} (x,a) =  \lim_{\mu} \lim_{\lambda}  x_\mu a_\lambda
 \end{equation} in the weak$^*$ topology of $X^{**}$ (cf. \cite[2.6.26]{Dales00}).\smallskip

Our next proposition completes the whole picture.

\begin{proposition}\label{p multipliers 1} Let $X$ be an essential Banach $A$-bimodule over a C$^*$-algebra $A$ and let $T: A \to X$ be a bounded linear operator. The following are equivalent:\begin{enumerate}[$(a)$] \item $T^{**}|_{M(A)} : M(A) \to X^{**}$ is a generalised (Jordan) derivation;
\item $T^{**} : A^{**} \to X^{**}$ is a generalised (Jordan) derivation;
\item $T^{**} = d + M_{T^{**} (1)}$, where $d: A^{**} \to X^{**}$ is a derivation and $M_{T^{**}(1)} (a) = T^{**} (1) \circ a = \frac12 ( a T^{**} (1) + T^{**} (1) a)$;
\item $T^{**} : A^{**} \to X^{**}$ is a local generalised (Jordan) derivation;
\item $T^{**}|_{M(A)} : M(A) \to X^{**}$ is a local generalised (Jordan) derivation;
\item $T$ is a generalised derivation;
\item $a T(b) c = 0$, whenever $ab=bc=0$ in $A$.
\end{enumerate}
\end{proposition}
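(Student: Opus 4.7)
The plan is to close the cycle $(b)\Rightarrow(a)\Rightarrow(f)\Rightarrow(b)$, supplemented by the lateral equivalences $(b)\Leftrightarrow(c)$, $(b)\Leftrightarrow(d)$, $(a)\Leftrightarrow(e)$ and $(f)\Leftrightarrow(g)$. Throughout, I exploit that $A^{**}$ and $M(A)$ are unital C$^*$-algebras on which $X^{**}$ is a unital Banach bimodule, so that Theorem~\ref{t LiPan} applies to each pair.

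Several pieces are essentially free. First, $(f)\Leftrightarrow(g)$: if $T(ab)=T(a)b+aT(b)-a\xi b$ and $ab=bc=0$, then rearranging gives $aT(b)=a\xi b-T(a)b$, and multiplying on the right by $c$ yields $aT(b)c=0$; the converse is exactly Theorem~\ref{t AlBreExVill}. Next, $(b)\Leftrightarrow(d)$ and $(a)\Leftrightarrow(e)$ follow from Theorem~\ref{t LiPan} applied to the pairs $(A^{**},X^{**})$ and $(M(A),X^{**})$, respectively. The algebraic equivalence $(b)\Leftrightarrow(c)$ I will obtain by setting $d:=T^{**}-M_{T^{**}(1)}$ and checking by direct expansion that the generalised derivation identity for $T^{**}$ is equivalent to $d$ being an associative derivation. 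The restriction $(b)\Rightarrow(a)$ is trivial since $M(A)\subseteq A^{**}$ shares the same unit; and for $(a)\Rightarrow(f)$, restricting the identity on $M(A)\times M(A)$ to $A\times A\subseteq M(A)\times M(A)$ gives (f) with $\xi:=T^{**}(1)$.

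The substantive step is $(f)\Rightarrow(b)$: lifting the generalised derivation identity from $A$ to $A^{**}$. Let $(e_\lambda)\subseteq A$ be a bounded approximate identity, so $e_\lambda\to 1$ weak$^*$ in $A^{**}$. First I would normalise the parameter: applying the identity to the pairs $(e_\lambda,a)$ and $(a,e_\lambda)$ and taking weak$^*$-limits in $\lambda$, using the separate weak$^*$-continuity of the Arens product on $A^{**}$, the iterated-limit formula~(\ref{eq product bidual module}) and the weak$^*$-continuity of $T^{**}$, yields $\xi a=T^{**}(1)a$ and $a\xi=aT^{**}(1)$ for every $a\in A$; the identity therefore reduces to $T(ab)=T(a)b+aT(b)-aT^{**}(1)b$ for all $a,b\in A$. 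Then, given $u,v\in A^{**}$, I would pick bounded nets $(a_\lambda),(b_\mu)\subseteq A$ with $a_\lambda\to u$ and $b_\mu\to v$ weak$^*$ (Goldstine) and push the identity for $T(a_\lambda b_\mu)$ through iterated weak$^*$-limits (in $\mu$ first, then in $\lambda$), to arrive at
$$T^{**}(uv)=T^{**}(u)v+uT^{**}(v)-uT^{**}(1)v.$$

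The main obstacle will be controlling the mixed term $a_\lambda T^{**}(1)b_\mu$ across the bidual limits: the separate weak$^*$-continuity of the Arens-extended bimodule actions on $X^{**}$, as formulated in the preliminaries, grants continuity in the outer variable only when the ``inner'' element lies in $X$, not in $X^{**}$. This I propose to circumvent by computing $a_\lambda T^{**}(1)b_\mu$ via the iterated-limit prescription~(\ref{eq product bidual module}) in the prescribed order, combined with the Arens regularity of the C$^*$-product on $A^{**}$, which secures full separate weak$^*$-continuity of multiplication on $A^{**}\times A^{**}$ and thereby legitimises taking the two weak$^*$-limits in succession.
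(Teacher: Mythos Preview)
Your scheme differs from the paper's in the crucial closing step. The paper does \emph{not} prove $(f)\Rightarrow(b)$ by pushing the generalised-derivation identity through iterated weak$^*$ limits; instead its substantive implication is $(g)\Rightarrow(a)$, argued as follows: given $a,b,c\in M(A)$ with $ab=bc=0$, one passes to the JB$^*$-subtriples they generate, takes cube roots $a^{[1/3]},b^{[1/3]},c^{[1/3]}\in M(A)$, approximates these weak$^*$ by bounded nets $(x_\lambda),(y_\mu),(z_\nu)$ in $A$, and observes that the elements $a^{[1/3]}x_\lambda^*a^{[1/3]}$, $b^{[1/3]}y_\mu^*b^{[1/3]}$, $c^{[1/3]}z_\nu^*c^{[1/3]}$ lie in $A$ (because $M(A)\cdot A\cdot M(A)\subseteq A$) and inherit the zero-product relations; hypothesis $(g)$ then gives the vanishing, and three successive weak$^*$ limits (each of which \emph{is} legitimate, since at each stage one factor lies in $A$ or $X$) yield $a\,T^{**}(b)\,c=0$. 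Theorem~\ref{t LiPan} applied to the unital pair $(M(A),X^{**})$ then delivers $(a)$.

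Your $(f)\Rightarrow(b)$ has a real gap precisely where you locate it, and your proposed fix does not close it. Arens regularity of the product of $A$ controls only the map $A^{**}\times A^{**}\to A^{**}$; it says nothing about the second Arens extension of the \emph{module} action $\pi_2:X\times A\to X$. The inner limit $\lim_\mu (a_\lambda\xi)\,b_\mu$ requires weak$^*$ continuity of $\pi_2^{***}(a_\lambda\xi,\,\cdot\,)$, which is guaranteed only when $a_\lambda\xi\in X$, not for $a_\lambda\xi\in X^{**}$. The iterated-limit formula \eqref{eq product bidual module} does not help either: it computes $z\cdot v$ as $\lim_\sigma\lim_\mu z_\sigma b_\mu$ for nets $z_\sigma\in X$, whereas your net $a_\lambda\xi$ sits in $X^{**}$. (For $X=A$ the argument does go through, which may be the source of the intuition, but the proposition is stated for an arbitrary essential bimodule.) The paper's route via the zero-product condition and the multiplier algebra is engineered exactly to avoid having to pass $\xi\in X^{**}$ through such a limit: at every stage of the $(g)\Rightarrow(a)$ argument the ``middle'' factor $T^{**}(\cdot)$ is evaluated at an element of $A$, so only $T$ itself, never a bimodule action on $X^{**}$, needs weak$^*$ continuity in the inner variable.
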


\begin{proof} The implications $(b)\Leftrightarrow (c) \Rightarrow (d)\Rightarrow (e)\Rightarrow (g)$ are clear. The equivalence $(f)\Leftrightarrow (g)$ was established in Theorem \ref{t AlBreExVill} (\cite[Theorem 4.5]{AlBreExVill09}). To see $(a)\Rightarrow (b)$ suppose that $T^{**}|_{M(A)}$ is a generalised derivation, i.e., $$T(a b) = T(a) b + a T(b) - a T^{**} (1) b,$$ for every $a,b$ in $M(A)$. Since, by Goldstine's theorem, the closed unit ball of $A$ is weak$^*$ dense in the closed unit ball of $A^{**}$, we deduce from \eqref{eq product bidual module} that the above equality also holds when $a$ and $b$ are in $A^{**}$ and $T$ is replaced with $T^{**}.$ Thus, $T^{**}$ is a generalised derivation.\smallskip

We shall finally prove the implication $(g)\Rightarrow (a)$. Let $a, b, c$ be elements in $M(A)$ with $a b= bc =0$. We may assume that $a,b$ and $c$ lie in the closed unit ball of $M(A)$. We observe that $a^{[2n-1]} b= 0$ for every natural $n$. Therefore, $\alpha b= 0$, for every $\alpha$ in the JB$^*$-subtriple, $M(A)_a,$ of $M(A)$ generated by $a$. We can similarly show that \begin{equation}\label{eq 3 prop multipliers}\alpha \beta = \beta \gamma = 0
\end{equation} for every $\alpha\in M(A)_{a}$, $\beta\in M(A)_{b}$ and $\gamma\in M(A)_{c}$.

Since $M(A)$ is a C$^*$-subalgebra of $A^{**}$, by Goldstine's theorem, we can find nets $(x_{\lambda})$, $(y_{\mu})$ and $(z_{\nu})$ in the closed unit ball of $A$, converging in the weak$^*$ topology of $A^{**}$ to $a^{[\frac13]}$, $b^{[\frac13]}$ and $c^{[\frac13]}$, respectively. The nets $\left(a^{[\frac13]} x_{\lambda}^* a^{[\frac13]}\right)$, $ \left( b^{[\frac13]} y_{\mu}^* b^{[\frac13]}\right) $, and $\left(c^{[\frac13]} z_{\nu}^* c^{[\frac13]}\right)$ lie in $A$, and by $(\ref{eq 3 prop multipliers})$, we have $$\left(a^{[\frac13]} x_{\lambda}^* a^{[\frac13]}\right) \left( b^{[\frac13]} y_{\mu}^* b^{[\frac13]}\right) =0 = \left( b^{[\frac13]} y_{\mu}^* b^{[\frac13]}\right) \left(c^{[\frac13]} z_{\nu}^* c^{[\frac13]}\right),$$ for every $\lambda, \mu$ and $\nu$. Our hypothesis assures that
$$\left(a^{[\frac13]} x_{\lambda}^* a^{[\frac13]}\right) T\left( b^{[\frac13]} y_{\mu}^* b^{[\frac13]}\right) \left(c^{[\frac13]} z_{\nu}^* c^{[\frac13]}\right)=0,$$ for every $\lambda, \mu$ and $\nu$. Taking weak$^*$ limit in $\nu$, it follows from the properties of $\pi_2^{***}$ that $$\left(a^{[\frac13]} x_{\lambda}^* a^{[\frac13]}\right) T\left( b^{[\frac13]} y_{\mu}^* b^{[\frac13]}\right) c=0,$$ for every $\lambda,$ and $\mu$. Finally, taking weak$^*$ limits first in $\mu$ and later in $\lambda$ we have $ a T^{**} (b) c=0.$ We have therefore shown that $ a T^{**}(b) c =0$ whenever $a b= b c=0$ in $M(A)$. Since $X$ is an essential $A$-bimodule, and $A^{**}$ is unital, it can be easily checked that $X^{**}$ is a unital $M(A)$-bimodule.  Thus, the statement $(a)$ follows from Theorem \ref{t LiPan} above.
\end{proof}

It is worth to notice that the proof of the above theorem makes use of the local structure of a C$^*$-algebra when it is regarded as a JB$^*$-triple. Jordan techniques are also employed in the next remark.

\begin{remark}\label{r more equivalences}{\rm In the hypothesis above, every statement in Proposition \ref{p multipliers 1} is equivalent to:
\begin{enumerate}[$(h)$]
\item For each right closed ideal $R$ of $A$ and each left closed ideal $L$ of $A$, we have $$T(R\cap L) \subseteq X L + R X.$$
\end{enumerate}

Indeed, suppose that $T^{**}: A^{**} \to X^{**}$ is a generalised derivation. Let $a$ be an element in the intersection $R\cap L.$ The element $a^{[3]} = a a^* a$ lies in $R$ and in $L$. By induction on $n$, we proved that $a^{[2n-1]}$ belongs to $L\cap R$ for every natural $n$. Therefore, the JB$^*$-subtriple, $A_a,$ of $A$ generated by $a$ is contained in $R\cap L$. Let us take $b\in A_a$ satisfying $b^{[3]} =a$. Thus, $$T(a) = T (b^{[3]}) = T (b b^* b )= T(b) b^* b + b T(b^* b) - b T^{**} (1) b^* b \in X L + R X,$$ which proves $(b)\Rightarrow (h)$.\smallskip

Assume now that $T(R\cap L) \subseteq X L + R X$ whenever $R$ is a right closed ideal of $A$ and $L$ is a left closed ideal of $A$. Pick $a,b$ and $c$ in $A$ with $a b = 0 = bc$. Let $R= \{ x\in A : a x= 0\}$ and $L=\{ x\in A : x c =0\}$. In this case, $R$ is a right closed ideal, $L$ is a left closed ideal and $b\in L\cap R$, then, by assumptions, $T(b) \in X L + R X$. Therefore, $a T(b) c \in  a X L c + a R X c = \{0\}$, which gives the desired equivalence.\smallskip
}\end{remark}

When, in the hypothesis of the above Remark \ref{r more equivalences}, $X$ coincides with $A$, we have $X L  +  R X = L + R$. Under the additional assumption of $A$ being unital, V. Shul'man established in \cite[Theorem 1]{Shu} that a bounded linear mapping $T: A\to A$ satisfies statement $(h)$ in Remark \ref{r more equivalences} if, and only if, $T = D  +L_a$, where $D: A \to A$ is a derivation and $L_a$ is the left multiplication operator by the element $a=T(1)$ (if and only if $T$ is a generalised derivation). So, our Proposition \ref{p multipliers 1} and the equivalence with Remark \ref{r more equivalences}$(h)$ provides us a generalisation of the result proved by Shul'man.\smallskip

We culminate this section exploring the connections with (local) triple derivations on a C$^*$-algebra. Let $\delta : A \to A$ be a triple derivation on a C$^*$-algebra. We have already commented that $\delta^{**} : A^{**} \to A^{**}$ is a triple derivation (see \eqref{eq bitranspose of a derivation}). By Lemma 1 in \cite{HoMarPeRu} $\delta^{**}(1)^* = -\delta^{**} (1)$. It can be easily seen that $$ \delta^{**} ( a\circ b) = \delta^{**} (\J a1b ) = \J {\delta^{**} (a)}1b + \J a{\delta^{**} (1)}b + \J a1{\delta^{**} (b)}$$ $$= {\delta^{**} (a)}\circ b + a \circ {\delta^{**} (b)} - U_{a,b} \delta^{**} (1),$$ which shows that $\delta^{**}$ is a generalised Jordan derivation and hence a generalised derivation (compare \cite[Comments before Proposition 5 and Remark 8]{BurFerGarPe2012}). Moreover, $$\delta^{**} (a^*) = \delta^{**} \J 1a1= 2 \J {\delta^{**} (1)}a1 + \J 1{\delta^{**}(a)} 1 = 2 \delta^{**} (1) \circ a^* + \delta^{**}(a)^*.$$ In particular, every triple derivation $\delta$ with $\delta^{**} (1) = 0$ is a symmetric derivation or a $^*$-derivation (i.e. $\delta$ is a derivation with $\delta (a^* ) = \delta (a)^*$, for every $a\in A$). In particular, for every triple derivation $\delta$,  $\delta^{**} -  \delta(\frac12 \delta^{**}(1),1)$ is a $^*$-derivation on $A$.

\begin{corollary}\label{c final coro triple derivations on C*-algebras} Let $T: A \to A$ be a linear operator on a C$^*$-algebra. The following are equivalent:
\begin{enumerate}[$(a)$] \item $T$ is a local triple derivation;
\item $T$ is a triple derivation;
\item $T^{**}$ is a bounded generalised derivation with $T^{**} (1) = - T^{**} (1)^*$ and $T^{**}- L_{T^{**}(1)}$ is a symmetric operator;
\item $T^{**}-L_{T^{**} (1)}$ is a bounded $^*$-derivation and $T^{**} (1) = - T^{**} (1)^*$;
\item $T^{**}$ is a bounded generalised derivation with $T^{**} (1) = - T^{**} (1)^*$ and $T^{**} -  \delta(\frac12 T^{**}(1),1)$ is a symmetric operator;
\item $T^{**} -  \delta(\frac12 \delta^{**}(1),1)$ is a bounded $^*$-derivation and $T^{**} (1) = - T^{**} (1)^*$;
\item $T^{**} (1) = - T^{**} (1)^*$, $T^{**}- L_{T^{**}(1)}$ is a symmetric operator, $T$ is bounded, and $a T(b) c = 0$, whenever $ab=bc=0$ in $A$.
\item $T^{**} (1) = - T^{**} (1)^*$, $T^{**} -  \delta(\frac12 \delta^{**}(1),1)$ is a symmetric operator, $T$ is bounded, and $a T(b) c = 0$, whenever $ab=bc=0$ in $A$.
\end{enumerate} $\hfill\Box$
\end{corollary}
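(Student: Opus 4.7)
The plan is to organise the eight conditions around the central equivalence $(a)\Leftrightarrow(b)$, which is exactly Theorem \ref{thm local triple derivations on C*-algebras are triple derivations}. Everything else will be a dictionary between the triple-derivation property and the decomposition of $T^{**}$ as a $^*$-derivation plus a controlled ``correction term'' built from $u:=T^{**}(1)$.

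For the direction $(b)\Rightarrow(c),(d),(e),(f)$ I would invoke almost verbatim the paragraph immediately preceding the corollary. If $T$ is a triple derivation, then $T^{**}$ is a triple derivation on $A^{**}$, $u^*=-u$ by Lemma 1 in \cite{HoMarPeRu}, $T^{**}$ is a generalised Jordan derivation (hence a generalised derivation by \cite[Remark 8]{BurFerGarPe2012}), and $T^{**}-\delta(\frac12 u,1)$ is a $^*$-derivation. A direct computation with $\{a,b,c\}=\frac12(ab^*c+cb^*a)$ and $u^*=-u$ shows $\delta(\frac12 u,1)(x)=u\circ x = M_u(x)$, which yields $(e)$ and $(f)$. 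The operator $M_u-L_u$ sends $x$ to $\frac12(xu-ux)$, which is a $^*$-derivation when $u^*=-u$ (it is a derivation by direct expansion and is $^*$-preserving since $(xu-ux)^*=u^*x^*-x^*u^*=x^*u-ux^*$); hence $T^{**}-L_u$ is also a $^*$-derivation, giving $(c)$ and $(d)$.

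The return trip $(d)\Rightarrow(b)$ closes the cycle and rests on two calculations. First, every $^*$-derivation $d$ of a C$^*$-algebra is a triple derivation, directly from $d(b^*)=d(b)^*$ and the Leibniz rule expanded against $\{a,b,c\}=\frac12(ab^*c+cb^*a)$. Second, when $u^*=-u$ the left multiplication $L_u$ is a triple derivation on $A^{**}$: expanding the triple-derivation identity for $L_u$, the middle term contributes $\frac12(ab^*u^*c+cb^*u^*a)=-\frac12(ab^*uc+cb^*ua)$ and cancels the cross terms from the outer two, leaving $L_u\{a,b,c\}$. Consequently $T^{**}=(T^{**}-L_u)+L_u$ is a triple derivation, and hence so is its restriction $T$. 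The equivalences $(c)\Leftrightarrow(d)$ and $(e)\Leftrightarrow(f)$ are then routine, because the generalised-derivation identity $T^{**}(ab)=T^{**}(a)b+aT^{**}(b)-aub$ forces the Leibniz rule for $T^{**}-L_u$ (respectively $T^{**}-M_u$) automatically; and $(c)\Leftrightarrow(e)$, $(d)\Leftrightarrow(f)$ follow from the $^*$-preserving character of $M_u-L_u$ noted above.

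Finally, $(c)\Leftrightarrow(g)$ and $(e)\Leftrightarrow(h)$ are immediate from Proposition \ref{p multipliers 1}, which identifies the zero-product condition ``$aT(b)c=0$ whenever $ab=bc=0$'' (for bounded $T$) with $T^{**}$ being a generalised derivation. The main obstacle will not be any single deep step but rather the careful bookkeeping among the three distinct correction terms $L_u$, $M_u$, and $\delta(\frac12 u,1)$, together with the verification that anti-self-adjointness of $u$ is precisely what makes each of them a triple derivation on $A^{**}$, so that subtracting them from $T^{**}$ preserves the triple-derivation structure while revealing the underlying $^*$-derivation.
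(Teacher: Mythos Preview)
Your proposal is correct and follows essentially the same route as the paper, which leaves the corollary unproved (it is marked with $\Box$) and relies on the discussion in the preceding paragraph together with Theorem~\ref{thm local triple derivations on C*-algebras are triple derivations} and Proposition~\ref{p multipliers 1}. You have simply made explicit the computations the paper leaves to the reader, in particular the identification $\delta(\tfrac12 u,1)=M_u$ when $u^*=-u$, the fact that $L_u$ and $M_u$ are triple derivations for skew $u$, and the translation between the generalised-derivation identity and the Leibniz rule for $T^{**}-L_u$ (resp.\ $T^{**}-M_u$).
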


\bigskip
\bigskip
\bigskip
\bigskip
\bigskip
\bigskip

\medskip

\noindent Departamento de An\'{a}lisis Matem\'{a}tico, Facultad de
Ciencias,\\ Universidad de Granada, 18071 Granada, Spain. \medskip

\noindent e-mail: aperalta@ugr.es

\end{document}